\documentclass{amsart}

\usepackage{latexsym}
\usepackage{enumerate}
\usepackage{amssymb}
\usepackage{amsfonts}
\usepackage{amsmath}
\usepackage{mathrsfs}
\usepackage{amsthm}
\usepackage{geometry}
\usepackage[colorlinks,
linkcolor=red,
anchorcolor=blue,
citecolor=green
]{hyperref}
\usepackage{varioref}
\usepackage{hyperref}
\usepackage{cleveref}
\usepackage[all]{xy}

\newcommand{\Ric}{\mathrm{Ric}}

\newcommand{\Vol}{\mathrm{Vol}}

\newcommand{\dist}{\mathrm{dist}}

\newtheorem{thm}{Theorem}[section]

\newtheorem{fact}{Fact}[section]
\newtheorem{prop}{Proposition}[section]

\newtheorem{lmm}{Lemma}[section]
\newtheorem{question}{Question}[section]

\newtheorem*{rmk}{Remark}

\theoremstyle{remark}


\title{ On the Gromov-Hausdorff limits of Tori with Ricci conditions }

\author{Shengxuan Zhou}
\address{Beijing International Center for Mathematical Research\\
Peking University\\
Beijing\\ 100871\\ China}
\address{Current address: Institut de Math\'ematiques de Toulouse\\
Universit\'e Toulouse III - Paul Sabatier\\
Toulouse\\ 31062\\ France}

\email{zhoushx98@outlook.com}

\thanks{}
\keywords{}
\date{}
\dedicatory{}

\begin{document}

\begin{abstract}
Let $n\geq 4$. In this paper, we construct a sequence of smooth Riemannian metrics $g_i $ on $\mathbb{R}^n$ such that:
\begin{enumerate}[1)]
\item $g_i = g_{\rm Euc} $ outside the standard Euclidean unit ball $B_1 (0) \subset \mathbb{R}^n $,
\item $\Ric_{g_i} \geq -\Lambda $ and $ {\rm diam} \left( B_1 (0) ,g_i \right) \leq D $ for some $\Lambda ,D>0$ independent of $i$,
\item The pointed Gromov-Hausdorff limit of $(\mathbb{R}^n ,g_i) $ is a topological orbifold but not a topological manifold.
\end{enumerate}

As a consequence, for $n\geq 4$, we can find a sequence of tori $(T^n , g_i )$ with Ricci lower bound and diameter bound such that the Gromov-Hausdorff limit is not a topological manifold. This answers a question of Bru\`e-Naber-Semola \cite{bruenabersemola1} in the negative. In $4$-dimensional case, we prove that the Gromov-Hausdorff limit of tori with $2$-side Ricci bound and diameter bound is always a topological torus. In the K\"ahler case, the Gromov-Hausdorff limit of K\"ahler tori of real dimension $4$ with Ricci lower bound is always a topological orbifold with isolated singularities, and the only type of singularities is $\mathbb{R}^4 / Q_8 $.
\end{abstract}
\maketitle
\tableofcontents

\section{Introduction}

In \cite{bruenabersemola1}, Bru\`e-Naber-Semola proved that for a sequence of Riemannian manifolds with bounded diameter and sectional curvature lower bound, if they are all homeomorphic to a torus, then its Gromov-Hausdorff limit is a topological torus, which answers a question of Petrunin \cite{Zamora2} in the affirmative. They then pose the following open question about sequences of torus satisfying a lower bound on the Ricci curvature. Note that Petrunin also posed this question \cite{Zamora3}.

\begin{question}[\cite{bruenabersemola1, Zamora3}]\label{question}
Let $(M^n_i, g_i) \xrightarrow{GH} (X^k,d_X)$ satisfy ${\rm Ric}_{g_i} \ge -(n-1)$ and $ {\rm diam} (M_i ,g_i ) \le D $, where $D>0$ is a constant independent of $i$, and $k$ is the rectifiable dimension of $X$. Assume that $M^n_i$ are all homeomorphic to tori $T^n$. Is $X^k$ homeomorphic to $T^k$?	
\end{question}

Based on the topological stability of non-collapsed sequence of three-dimensional manifolds \cite{Simon}, Bru\`e-Naber-Semola gives an affirmative answer to the above question when $n = 3$.

But in general, the answer to Question \ref{question} is negative when $k\geq 4$. The following result is our main theorem in this paper.

\begin{thm}\label{thmexamplesequencetorus}
Let $n\geq k\geq 4$. Then we can find a sequence of Riemannian metrics $g_i$ on $T^n $ satisfy ${\rm Ric}_{g_i} \ge -(n-1)$ and $ {\rm diam} (M_i , g_i) \le D $ for some $D>0$, but the Gromov-Hausdorff limit space $(X^k,d_X)$ is not a topological manifold, where $k$ is the rectifiable dimension of $X$.
\end{thm}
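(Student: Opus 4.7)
The plan is to reduce Theorem~\ref{thmexamplesequencetorus} to the local construction on $\mathbb{R}^n$ announced in the abstract, treating the non-collapsed case $k=n$ by gluing into a single flat torus and the collapsed case $k<n$ by taking a product with a shrinking flat factor.

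\textbf{Gluing step ($k=n$).} Fix a flat torus $T^n=\mathbb{R}^n/L\mathbb{Z}^n$ with $L>3$, so that the closed Euclidean ball $\overline{B_1(0)}$ embeds isometrically. Using the sequence $g_i$ on $\mathbb{R}^n$ provided by the local construction, define a Riemannian metric $\widetilde g_i$ on $T^n$ equal to $g_i$ on the embedded copy of $\overline{B_1(0)}$ and to the flat metric on the complement. Because $g_i\equiv g_{\mathrm{Euc}}$ in a neighborhood of $\partial B_1(0)$, the result is smooth on all of $T^n$. Its Ricci curvature satisfies $\Ric_{\widetilde g_i}\geq -\Lambda$, and its diameter is at most $\mathrm{diam}(T^n,g_{\mathrm{Euc}})+D$. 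A uniform rescaling then normalizes the Ricci lower bound to $-(n-1)$ while preserving a uniform diameter bound. The pointed Gromov-Hausdorff limit of $(T^n,\widetilde g_i)$ is the flat torus with $B_1(0)$ removed, glued to the pointed GH limit of $(B_1(0),g_i)$; by property (3) of the local construction this limit contains an orbifold singularity that is not a topological manifold point. Since the limit carries an open set isometric to a flat torus minus a ball, its rectifiable dimension is $n=k$.

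\textbf{Collapsing step ($k<n$).} Write $T^n=T^k\times T^{n-k}$ and equip it with the product metrics $\widetilde g_i\oplus \varepsilon_i^{2}g_{\mathrm{flat}}$, where $\widetilde g_i$ is the metric on $T^k$ constructed in the previous step and $\varepsilon_i\to 0$. The Ricci tensor of a Riemannian product is the direct sum of the factor Ricci tensors, so since the second factor is flat the lower Ricci bound is preserved, and the diameter remains uniformly bounded. The pointed Gromov-Hausdorff limit of the product sequence is exactly the $k$-dimensional limit of $(T^k,\widetilde g_i)$ already described; its rectifiable dimension is $k$ and it fails to be a topological manifold at the orbifold point inherited from the local construction.

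\textbf{Main obstacle.} The reduction above is essentially formal. The real difficulty lies entirely in the local construction on $\mathbb{R}^n$: producing smooth metrics that are flat Euclidean outside $B_1(0)$, carry a uniform Ricci lower bound and a uniform diameter bound inside, and degenerate in the Gromov-Hausdorff sense to a non-manifold orbifold. A natural strategy is to start from a singular flat model $\mathbb{R}^n/\Gamma$ for a finite subgroup $\Gamma\subset O(n)$ acting with an isolated fixed point at the origin, desingularize on a scale $r_i\to 0$ by inserting an ALE or ALE-like model (such as an Eguchi-Hanson bubble in the $n=4$ case), and then interpolate back to the flat Euclidean metric on an intermediate annulus. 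Maintaining $\Ric\geq -\Lambda$ through the gluing annulus while arranging for the inserted bubble to collapse to the orbifold point in the limit is the technical heart of the paper, and the restriction $n\geq 4$ reflects the need for an isolated non-manifold orbifold model, which is not available in lower dimensions.
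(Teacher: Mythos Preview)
Your reduction steps—gluing the local construction into a flat torus for $k=n$, then taking a product with a shrinking flat $T^{n-k}$ for $k<n$—are correct and match the paper exactly. The gap is in your proposed strategy for the local construction itself. Desingularizing $\mathbb{R}^4/\Gamma$ by an ALE bubble such as Eguchi--Hanson and then ``interpolating back to flat Euclidean'' cannot produce metrics on $\mathbb{R}^4$: the Eguchi--Hanson space has end $\mathbb{R}P^3$, and $\mathbb{R}P^3$ does not embed in $\mathbb{R}^4$ (by Crisp--Hillman the only nontrivial spherical space forms $\mathbb{S}^3/\Gamma$ that embed locally flat in $\mathbb{R}^4$ are $\mathbb{S}^3/Q_8$ and $\mathbb{S}^3/I^*$). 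Even for those $\Gamma$, the minimal ALE resolution of $\mathbb{C}^2/\Gamma$ has $b_2>0$ and is not diffeomorphic to an open subset of $\mathbb{R}^4$, so after your gluing the ambient manifold would not be a torus. The topological constraint that the ``bubble'' must itself be an open subset of $\mathbb{R}^4$ is the missing idea.

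The paper's decisive observation is that $\mathbb{S}^3/Q_8$ arises as the boundary of a tubular neighborhood of the standard embedding $\mathbb{R}P^2\hookrightarrow\mathbb{R}^4$. This neighborhood \emph{is} an open subset of $\mathbb{R}^4$, and on it the paper builds, via an explicit Berger-sphere cohomogeneity-one ansatz $dr^2+\rho(r)^2\bigl[\phi(r)^2(\sigma^1)^2+(\sigma^2)^2+(\sigma^3)^2\bigr]$, a smooth metric with $\Ric\geq 0$ that is exactly the cone $C(\mathbb{S}^3/Q_8)$ outside a compact set. Scaling this model by $\epsilon\to 0$ collapses the core $\mathbb{R}P^2$ to a point; because the conical end is scale-invariant, the transition to a fixed ambient metric on $\mathbb{R}^4$ can be arranged on a region independent of $\epsilon$, which is what yields the uniform Ricci lower bound.
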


\begin{rmk}
In fact, in our construction, $X^k$ is a topological orbifold, and the tangent cones at any singular point is isomorphic to $C(\mathbb{S}^3 /Q_8 ) \times \mathbb{R}^{k-4} $, where $Q_8 $ is the quaternion group.
\end{rmk}

\begin{rmk}
In Theorem \ref{thmexamplesequencetorus}, our construction is actually a non-collapsed example. Here, we discuss potential obstructions to the construction of collapsed examples. By Gromov’s non-collapsed theorem for aspherical manifolds, the universal cover of any sequence of tori satisfying a Ricci curvature lower bound and a diameter upper bound must be locally non-collapsed. In this case, the convergence can always be described in terms of group actions on the non-collapsed universal coverings and their corresponding quotients. If the limit space has singularities, the group actions may lead to a large topological singular subset in the limit of the universal coverings. 

For example, in the case where $(T^4_i, g_i) \xrightarrow{GH} (X^k,d_X)$ with $k\leq 3$, if such an example exists, then on the $4$-dimensional non-collapsed Ricci limit space given by the sequence of universal coverings, a topological singular set with positive Hausdorff dimension may occur. This seems to contradict certain known results, such as the recent strong characterization given by Brue-Pigati-Semola \cite{bps1} of the topology of $4$-dimensional non-collapsed tangent cones. Hence, this could serve as a potential obstruction to constructing such examples in this case.
\end{rmk}

We now give an outline of the proof of Theorem \ref{thmexamplesequencetorus}. Obviously, we only need to prove it for the case $n=k=4$, and the examples in general case can be obtained by crossing a flat $T^{k-4}$ and a sequence of small flat $T^{n-k}$. Note that $\mathbb{R}P^2 $ can be viewed as an embedded submanifold in $T^4 $ and that the boundary of a tubular neighborhood of $\mathbb{R}P^2 $ is diffeomorphic to $\mathbb{S}^3 /Q_8$. Then we find a sequence of smooth Riemannian metrics on the tubular neighborhood of $\mathbb{R}P^2 $, which are equal near the boundary of the tubular neighborhood, but the restrictions on $\mathbb{R}P^2 $ converge to $0$. Our next step is to construct a sequence of metrics on $T^4 $, whose Gromov-Hausdorff limit is homeomorphic to $T^4 /\thicksim $, where $\thicksim $ is the equivalence relation: $x\thicksim y $ if and only if $x,y\in \mathbb{R}P^2 $. Finally, the boundary of a tubular neighborhood of $\mathbb{R}P^2 $ is diffeomorphic to $\mathbb{S}^3 /Q_8$ showing that there exists an open neighborhood of $[x]$ homeomorphic to $C(\mathbb{S}^3 /Q_8 ) $, where $x\in \mathbb{R} P^2$. It follows that $T^4 /\thicksim $ is not a topological manifold.

The key step in the proof of the above theorem is to construct a sequence of Riemannian metrics $\mathbb{R}^n$ as follows. 

\begin{prop}\label{propRnmetric}
    Let $n\geq 4$. Then there exists a sequence of smooth Riemannian metrics $g_i $ on $\mathbb{R}^n$ such that:
\begin{enumerate}[1)]
\item $g_i = g_{\rm Euc} $ outside the Euclidean unit ball $B_1 (0) \subset \mathbb{R}^n $,
\item $\Ric_{g_i} \geq -\Lambda $ and $ {\rm diam} \left( B_1 (0) ,g_i \right) \leq D $ for some $\Lambda ,D>0$ independent of $i$,
\item The pointed Gromov-Hausdorff limit of $(\mathbb{R}^n ,g_i) $ is a topological orbifold but not a topological manifold.
\end{enumerate}
\end{prop}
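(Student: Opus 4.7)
Plan: By taking products with a fixed flat $\mathbb{R}^{n-4}$ (which preserves all uniform bounds and merely appends Euclidean factors to the limit), it suffices to treat $n=4$. Fix once and for all a smooth embedding $\iota:\mathbb{R}P^2\hookrightarrow B_{1/2}(0)\subset\mathbb{R}^4$; by the Whitney sum relation together with a direct Euler number computation on the double cover $\mathbb{S}^2\to\mathbb{R}P^2$, the boundary of any sufficiently small tubular neighborhood $U_\delta$ of $\iota(\mathbb{R}P^2)$ is diffeomorphic to $\mathbb{S}^3/Q_8$. This topological fact is the source of the eventual $\mathbb{R}^4/Q_8$ tangent cone.

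On $U_\delta$, I work in Fermi coordinates with radial parameter $s=|\nu|$ and lift to the double cover of $\mathbb{R}P^2$, where the normal $D^2$-bundle is reducible with $SO(2)$-action. The ansatz is the cohomogeneity-one metric
\[
\tilde g_i = ds^2 + \phi_i(s)^2\,\alpha\otimes\alpha + \psi_i(s)^2\,\pi^{*} g_{\mathbb{S}^2},
\]
where $\alpha$ is the connection $1$-form for the normal $S^1$-direction and $\pi$ the bundle projection. Smoothness at the fiber tip forces $\phi_i(0)=0$, $\phi_i'(0)=1$; the collapsing is encoded by $\psi_i(0)=\varepsilon_i\to 0$; at $s=\delta$ the profile is matched to the Euclidean metric on the (twisted, nonproduct) $S^1$-bundle boundary. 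Descending through the free $\mathbb{Z}/2$-action produces a smooth $g_i$ on $U_\delta$, which we extend by $g_\Euc$ on $\mathbb{R}^4\setminus U_\delta$. Choose $\psi_i\equiv\varepsilon_i$ on $[0,\delta/2]$ followed by a concave ramp to the Euclidean value on $[\delta/2,\delta]$, and $\phi_i$ a standard smooth $S^1$-tip profile. Concavity makes $-\phi_i''/\phi_i$ and $-\psi_i''/\psi_i$ nonnegative in the Ricci tensor; on the constant region $\psi_i'\equiv 0$, so the potentially dangerous $-(\psi_i')^2/\psi_i^2$ term vanishes while the \emph{positive} Gauss curvature term $1/\psi_i^2=1/\varepsilon_i^2$ of $\mathbb{S}^2$ \emph{improves} the lower bound rather than destroying it.

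All remaining Ricci contributions are bounded independently of $i$, yielding $\Ric_{g_i}\geq -\Lambda$. The diameter of $B_1(0)$ in $g_i$ is bounded by $2\delta+\varepsilon_i\cdot\mathrm{diam}(\mathbb{R}P^2)+O(1)$, uniformly in $i$. In the pointed Gromov--Hausdorff limit, $\iota(\mathbb{R}P^2)$ contracts to a single point $p_\infty$, and a neighborhood of $p_\infty$ becomes the metric cone on $\partial U_\delta=\mathbb{S}^3/Q_8$, i.e.\ $\mathbb{R}^4/Q_8$, which is an orbifold chart but not a topological manifold. I expect the main obstacle to be the profile construction --- arranging $\phi_i,\psi_i$ so that simultaneously $\psi_i(0)\to 0$, the tip and the twisted Euclidean boundary both glue smoothly, and every component of $\Ric$ enjoys a uniform lower bound. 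The decisive saving feature is the positivity of $\mathbb{S}^2$'s sectional curvature, which converts the generic $1/\varepsilon_i^2$ blow-up for a collapsing positive-dimensional submanifold into a \emph{positive} contribution to $\Ric$.
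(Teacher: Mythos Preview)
Your overall strategy coincides with the paper's: embed $\mathbb{R}P^2$ in $\mathbb{R}^4$, collapse it inside its tubular neighborhood via a Berger-sphere cohomogeneity-one ansatz, and produce a $C(\mathbb{S}^3/Q_8)$ cone point in the limit. The gap is in the Ricci analysis. In the $\mathbb{S}^2$-directions, the Ricci tensor of $ds^2 + \phi_i^2\,\alpha\otimes\alpha + \psi_i^2\,\pi^*g_{\mathbb{S}^2}$ contains an O'Neill correction from the connection curvature $d\alpha$: since $|d\alpha|^2$ is measured with the scaled base metric $\psi_i^2 g_{\mathbb{S}^2}$, this contribution scales like $-\phi_i^2/\psi_i^4$. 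In the paper's explicit computation (Lemma~\ref{lmmriccicurvature}, with $\rho=\psi_i$ and $\rho\phi=\phi_i$) this is precisely the $-2\rho^{-2}\phi^2$ term appearing in $\Ric(X_2)$ and $\Ric(X_3)$, and nonnegativity there forces $\phi_i\leq\sqrt{2}\,\psi_i$ pointwise. On your region $[0,\delta/2]$ with $\psi_i\equiv\varepsilon_i$, a fixed ``standard tip profile'' for $\phi_i$ has $\phi_i(\delta/2)$ bounded away from $0$, so the $\mathbb{S}^2$-Ricci behaves like $-\phi_i(\delta/2)^2/\varepsilon_i^4\to-\infty$; the positive Gauss term $+1/\varepsilon_i^2$ loses by a factor of $\varepsilon_i^{-2}$. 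Your claim that ``all remaining Ricci contributions are bounded independently of $i$'' therefore fails with these profiles.

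The paper sidesteps this by constructing a \emph{single} metric $g$ on the total space of the normal bundle with $\Ric_g\geq 0$ and an exact cone end $C(\mathbb{S}^3/Q_8)$ at infinity (Proposition~\ref{propexamplesmoothingrhophiversion}; the warping functions are arranged so that the analogue of $\phi_i/\psi_i$ never exceeds $1$). The sequence is then simply $\epsilon_i^2 g$, which trivially keeps $\Ric\geq 0$. Because the cone end is scale-invariant, after a radial diffeomorphism the outer annulus of $\epsilon_i^2 g$ is \emph{independent of $i$}; one interpolates there by a cutoff with a fixed ambient metric on $\mathbb{R}^4$, so the Ricci lower bound on the gluing region is a fixed constant. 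This also resolves your second difficulty: you cannot literally ``match the profile to the Euclidean metric'' at $s=\delta$, since $g_{\Euc}$ in Fermi coordinates about an embedded $\mathbb{R}P^2$ is not of cohomogeneity-one form. The paper never attempts that match and only asks the fixed ambient metric to be Euclidean outside a large ball.
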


Although the Gromov-Hausdorff limit of tori with Ricci curvature lower bound is not necessary to homeomorphic to a torus when $n\geq 4$, it seems that we may still have some restrictions on the topology of the limit space. For example, in the $4$-dimensional case, we can show that the Gromov-Hausdorff limit of tori with $2$-side Ricci bound is always a topological torus:

\begin{prop}\label{propT42-sidericcibound}
Let $(M^4_i, g_i) \xrightarrow{GH} (X^k,d_X)$ satisfy $\left| {\rm Ric}_{g_i} \right| \le 3 $ and $ {\rm diam} (M_i ,g_i ) \le D $, where $D>0$ is a constant independent of $i$, and $k$ is the rectifiable dimension of $X$.  Assume that $M^4_i$ are all homeomorphic to tori $T^4$. Then $X^k$ is homeomorphic to $T^k$. Moreover, if $k\leq 3$, then $M_i$ is diffeomorphic to the standard $T^4$ for sufficiently large $i$.
\end{prop}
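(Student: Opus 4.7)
The plan is to separate into the non-collapsed case $k=4$ and the collapsed case $k\leq 3$, and apply different techniques in each.

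\textbf{Non-collapsed case ($k = 4$).} By Anderson's $\varepsilon$-regularity theorem and the Cheeger-Tian orbifold compactness result for non-collapsed $4$-manifolds with $|\Ric|\leq C$, the limit $X$ is a $C^{0}$-Riemannian orbifold with finitely many isolated singular points $p_{1},\dots,p_{N}$, each modeled on $\mathbb{R}^{4}/\Gamma_{j}$, with $C^{1,\alpha}$-convergence away from $\{p_{j}\}$. A bubble-tree analysis at each $p_{j}$ presents $M_{i}$ as diffeomorphic to $X^{\mathrm{reg}}$ with Ricci-flat ALE $4$-manifolds $N_{j}$ (asymptotic to $\mathbb{R}^{4}/\Gamma_{j}$) glued in along $\mathbb{S}^{3}/\Gamma_{j}$. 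I would then combine the topological identity
\[
\chi(M_{i}) \;=\; \chi_{\mathrm{top}}(X) - N + \sum_{j}\chi(N_{j}),
\]
the vanishings $\chi(T^{4}) = \sigma(T^{4}) = 0$, the inequality $\chi(N_{j}) \geq 2$ for any non-trivial Ricci-flat ALE $4$-manifold (exceptional spheres in the resolution), and the continuity of $b_{1}$ on non-collapsed limits with Ricci lower bound (Colding, Bru\`e-Naber-Semola) giving $b_{1}(X) = 4$, to rule out all bubbles and force $N=0$. At that point $X$ is a smooth oriented closed $4$-manifold with $b_{1}(X) = 4$ and $\chi(X) = 0$. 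Passing to the $\mathbb{Z}^{4}$-equivariant limit of the universal covers $\widetilde{M}_{i}=\mathbb{R}^{4}$ and invoking the Cheeger-Colding splitting theorem along the four generators in the style of Bru\`e-Naber-Semola, one concludes that $\widetilde{X}$ splits as $\mathbb{R}^{4}$ and hence $X$ is homeomorphic to a flat $T^{4}$.

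\textbf{Collapsed case ($k \leq 3$).} Here I would appeal to the collapsing theory for $4$-manifolds with $|\Ric|\leq C$ (Naber-Tian, with later refinements): for $i$ large, there is a smooth fibration $F_{i} \to M_{i} \to X$ whose fibers $F_{i}$ are infranilmanifolds of dimension $4-k$, and $X$ carries a smooth structure. Using $\pi_{1}(T^{4})=\mathbb{Z}^{4}$, $\pi_{2}(T^{4})=0$ and the long exact sequence of this fibration, the fibers must be (flat) tori, $X$ is forced to be homotopy equivalent to $T^{k}$, and the bundle must be trivial. Since $k\leq 3$, the homotopy equivalence upgrades to a homeomorphism $X \cong T^{k}$ by standard low-dimensional topology. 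Consequently $M_{i}$ is smoothly diffeomorphic to $T^{4-k}\times T^{k}$, i.e.\ the standard $T^{4}$.

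\textbf{Main obstacle.} The hardest step is excluding bubbles and orbifold singularities in the non-collapsed case: because $\chi(T^{4}) = \sigma(T^{4}) = 0$, Chern-Gauss-Bonnet alone does not rule out bubbles by positivity, so one has to combine it with the equality $b_{1}(X) = 4$ and with sharp information about the topology of Ricci-flat ALE $4$-manifolds (the non-trivial classes they contribute to $H_{2}$). A secondary technical point is justifying the Naber-Tian fibration structure in the $|\Ric|\leq C$ collapsed regime (rather than under a two-sided sectional bound) and checking that the resulting smooth identification is the standard one on $T^{4}$ rather than some potentially exotic smooth structure.
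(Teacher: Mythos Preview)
Your overall two-case split matches the paper, but in the non-collapsed case your Euler-characteristic strategy has a genuine gap, and the paper takes a rather different route.

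\textbf{The gap in the non-collapsed argument.} Your key inequality $\chi(N_j)\ge 2$ for a nontrivial Ricci-flat ALE $4$-manifold is only clear in the simply connected (Kronheimer) case; the bubbles arising here need not be simply connected, so the ``exceptional spheres'' heuristic does not apply without further work. More seriously, even granting $\chi(N_j)\ge 2$, your identity $0=\chi(X)-N+\sum_j\chi(N_j)$ together with $b_1(X)=4$ and rational Poincar\'e duality only yields $b_2(X)\le 6-N$, which does not force $N=0$. So the combination of Chern--Gauss--Bonnet, $b_1$-continuity, and $\chi(N_j)\ge 2$ does not close by itself; you would need additional control on $b_2(X)$ or on the signature/eta contributions, and you have not supplied it.

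\textbf{What the paper does instead.} The paper first lifts the embedded link $\mathbb{S}^3/\Gamma$ to the universal cover $\tilde M_i\cong\mathbb{R}^4$ and invokes Crisp--Hillman to restrict $\Gamma$ to $Q_8$ or $I^*$. It then rescales to extract a single Ricci-flat ALE bubble $V$, uses Mayer--Vietoris inside $\tilde M_i$ (where $H_2(\tilde M_i;\mathbb{Q})=0$) to force $b_2(V)=0$ and hence $\chi(V)\le 1$, and finishes with the ALE Hitchin--Thorpe inequality $2(\chi(V)-1/|\Gamma|)\ge 3|\tau(V)+\eta(\mathbb{S}^3/\Gamma)|$ together with the explicit values $|\eta(\mathbb{S}^3/Q_8)|=3/4$, $|\eta(\mathbb{S}^3/I^*)|=361/180$, giving $3|\eta|\ge 9/4>2$, a contradiction. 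This is quite different from a global $\chi$-count: the input that makes it work is the embedding restriction on $\Gamma$ and the vanishing of $b_2$ coming from the ambient $\mathbb{R}^4$, neither of which appears in your sketch.

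\textbf{Collapsed case.} Your appeal to a Naber--Tian type fibration is reasonable in spirit, but you are presupposing that $X$ is smooth, which is part of what must be shown. The paper instead passes to the equivariant limit of universal covers, applies the non-collapsed case to $\tilde X_\infty/\Lambda_\infty$ to see it is a manifold (hence a torus), shows the limit deck group acts freely, and only then invokes Fukaya's fibration theorem over the resulting $T^k$; the triviality of the bundle and the identification of $M_i$ with the standard $T^4$ are handled via the mapping class group of $T^3$. Your outline would need a comparable step establishing smoothness of the base before the fibration argument can run.
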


The proof of this proposition is to rule out the above singularities by using topological conditions when $\dim M_i =4$ and $\left| {\rm Ric}_{g_i} \right| \le 3 $. By the same argument, one can show that the limit of a non-collapsing sequence of $4$-dimensional spheres with uniform $2$-side Ricci bound is also a $4$-dimensional sphere. See Lemma \ref{lmmT4noncollap2-sidericcibound}.

From Proposition \ref{propT42-sidericcibound}, we see that a $4$-dimensional torus is diffeomorphic to the standard $T^4$ if and only if the metric sequence with bounded Ricci curvature and bounded diameter can collapse. But to the best of the author's knowledge, it is not known whether the $4$-dimensional exotic torus exists.

In the K\"ahler case, the Gromov-Hausdorff limit of $4$-dimensional tori with Ricci lower bound is always a topological orbifold with isolated singularities, and the only type of singularities is $\mathbb{R}^4 / Q_8 $.

\begin{prop}\label{propT4kahler}
Let $(M_i^4 , g_i) \xrightarrow{GH} (X^k,d_X)$ satisfy ${\rm Ric}_{g_i} \ge -3$ and $ {\rm diam} (M_i , g_i) \le D $, where $D>0$ is a constant independent of $i$, and $k\leq 4$ is the rectifiable dimension of $X$. Assume that $g_i$ are K\"ahler and $M^4_i$ are all homeomorphic to tori $T^4$. Then $X $ is homeomorphic to $T^k$ when $k\leq 3$, and $X $ is a topological orbifold when $k=4$. Moreover, when $k=4$, every point of $X$ has a neighborhood that is homeomorphic to an open subset of $ \mathbb{R}^4 /Q_8 $.
\end{prop}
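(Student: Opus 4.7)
The plan is to combine Cheeger-Colding-Tian regularity theory for K\"ahler Ricci limits with the global topological constraints provided by $M_i \cong T^4$, in particular the triviality of the canonical bundle together with the vanishing of $\chi(T^4)$ and $\sigma(T^4)$. By the standard stratification, the regular set $\mathcal{R}(X)$ is an open dense smooth K\"ahler manifold of dimension $k$, and every tangent cone of $X$ is a K\"ahler metric cone. The triviality of $K_{T^4}$ should pass to the limit as triviality of the orbifold canonical bundle of $X$, forcing every local uniformizing group $\Gamma$ at a singular point to sit inside $SU(2)$; hence $\Gamma$ is cyclic, binary dihedral, binary tetrahedral, binary octahedral, or binary icosahedral.

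For the non-collapsed case $k=4$, the 4-dimensional $\varepsilon$-regularity theorem gives that $\mathcal{S}(X)$ is discrete, and at each singular point the tangent cone is $\mathbb{C}^2/\Gamma$ for some finite $\Gamma \subset SU(2)$. The key reduction is to rule out every such $\Gamma$ except $Q_8$. I would attempt this via two complementary inputs: (i) an orbifold Gauss-Bonnet / signature balance against $\chi(T^4)=\sigma(T^4)=0$, which constrains the global combinatorics of the admissible singularities; and (ii) a topological argument in the spirit of the construction preceding Theorem \ref{thmexamplesequencetorus}, showing that a local model $\mathbb{C}^2/\Gamma$ can arise as a Gromov-Hausdorff limit of a subregion of a K\"ahler $T^4$ only if $\mathbb{S}^3/\Gamma$ bounds a tubular neighborhood of an embedded closed surface in $T^4$ compatible with the K\"ahler structure, which should force $\Gamma = Q_8$ via the normal bundle of $\mathbb{R}P^2 \hookrightarrow T^4$.

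For the collapsed cases $k\leq 3$, I would argue that a K\"ahler collapse of a smooth $T^4$ cannot develop codimension-$4$ orbifold singularities, since the nilpotent F-structure producing the collapse is incompatible with a concentration of curvature around a single collapsed surface required to form a $\mathbb{C}^2/\Gamma$ bubble. Combined with a direct adaptation of the Bru\`e-Naber-Semola strategy to the resulting smooth limit, this should yield $X \cong T^k$. The principal obstacle lies in the case $k=4$: eliminating the cyclic and exceptional $\Gamma \subset SU(2)$ requires transferring the global topology of $T^4$ into a pointwise obstruction on singularity models, which I expect to rest on a delicate interplay between K\"ahler orbifold theory and characteristic-class constraints on those embedded surfaces whose tubular neighborhoods degenerate in the limit.
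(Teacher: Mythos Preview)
Your proposal has a genuine gap in the non-collapsed case $k=4$: you are missing the decisive topological input that the paper actually uses, and you replace it with a program that likely cannot be made to work under a one-sided Ricci bound.

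The paper's argument for $k=4$ is short and does not pass through $SU(2)$ or through characteristic-class identities at all. Liu--Sz\'ekelyhidi (together with the refinement in the appendix) gives that $X$ is a normal complex orbifold with isolated singularities and that the smooth structures of $M_i$ converge in $C^\infty$ away from the singular set. The Albanese map forces each $M_i$ to be a genuine K\"ahler torus, so $\tilde M_i$ is diffeomorphic to $\mathbb{R}^4$. For a singular point with local model $\mathbb{R}^4/\Gamma$, the link $\mathbb{S}^3/\Gamma$ then embeds \emph{smoothly} in $M_i$ for large $i$, and this lifts to a smooth embedding $\mathbb{S}^3/\Gamma \hookrightarrow \mathbb{R}^4$. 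At this point one invokes the Crisp--Hillman classification: the only spherical $3$-manifolds that locally flat embed in $\mathbb{R}^4$ are $\mathbb{S}^3$, $\mathbb{S}^3/Q_8$, and $\mathbb{S}^3/I^*$, and the last cannot embed smoothly. So $\Gamma\in\{e,Q_8\}$ and one is done. Your route (i), an orbifold Gauss--Bonnet/signature balance, is what the paper uses in the \emph{two-sided} Ricci case (via Hitchin's inequality on an ALE Ricci-flat bubble); with only a lower Ricci bound you do not get Ricci-flat ALE bubbles, and there is no analogous inequality available to exclude, say, cyclic or binary dihedral $\Gamma$. Your route (ii) is not an argument but a restatement of the conclusion. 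The step ``triviality of $K_{T^4}$ forces $\Gamma\subset SU(2)$'' is plausible but irrelevant here: even granting it, you still face infinitely many cyclic and binary dihedral groups that you have no mechanism to exclude.

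For $k\le 3$ the paper also does not argue via F-structures. It passes to the universal cover with the uniform lattice $\Lambda_i$ of Kloeckner--Sabourau, applies the $k=4$ result to the non-collapsed limit $\tilde X_\infty/\Lambda_\infty$ to get an orbifold with isolated singularities, and then observes that because $\dim X<\dim(\tilde X_\infty/\Lambda_\infty)$ there exist, near every point, elements of $\Gamma_\infty/\Lambda_\infty$ of arbitrarily small displacement. This is incompatible with an \emph{isolated} orbifold point, so $\tilde X_\infty/\Lambda_\infty$ is in fact a manifold; one then finishes exactly as in the two-sided collapsing lemma. Your ``nilpotent F-structure incompatibility'' gestures at the right phenomenon but is not the mechanism that makes the proof go through.
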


This paper is organized as follows. In Section \ref{sectionconesmoothing}, we show that the metric cone $C(\mathbb{S}^3 /Q_8 ) $ is smoothable in $\mathbb{R}^4$. The proofs of Theorem \ref{thmexamplesequencetorus} and Proposition \ref{propRnmetric} are presented in Section \ref{sectionsequencetorus}. The proofs of Proposition \ref{propT42-sidericcibound} and \ref{propT4kahler} are contained in Section \ref{4dimcase}. In Appendix \ref{ghpolarized2dim}, we give a refinement of Liu-Sz\'ekelyhidi's results in \cite{lggs1, lggs2} on K\"ahler surfaces.

\vspace{0.2cm}

\textbf{Acknowledgement.} The author wants to express his deep gratitude to Professor Gang Tian for constant encouragement. He would also be grateful to Jiyuan Han, Wenshuai Jiang, Zuohai Jiang, Aaron Naber, Daniele Semola and Fan Ye for valuable conversations. In addition, the author would like to thank Sergio Zamora for informing him that Petrunin had also posed Question \ref{question}. The author is supported by National Key R\&D Program of China 2023YFA1009900. 

\section{\texorpdfstring{$C(\mathbb{S}^3 /Q_8 )$}{Lg} is smoothable in \texorpdfstring{$\mathbb{R}^4 $}{Lg} }
\label{sectionconesmoothing}

Let $P = f( \mathbb{R}P^2 ) $, where $f$ is the standard embedding
$$ f: \mathbb{R}P^2 = S^2 /\{ \pm 1 \} \to \mathbb{R}^4,\; [x_1 ,x_2 ,x_3] \mapsto ( x_1^{2} -x_2^{2} , x_1 x_2 ,x_1 x_3 ,x_2 x_3 ) .$$ 
Then the normal bundle $N_{P}$ of $P$ is a $2$-dimensional vector bundle has Euler number $\pm 2$, and the double covering $\tilde{N}_{P}$ is a $2$-dimensional vector bundle on $S^2 $ which has Euler number $\pm 4$. Hence we see that the vector bundle $\tilde{N}_{P}$ is isomorphic to the line bundle $\mathcal{O} (-4)$ on $\mathbb{C}P^1 \cong S^2 $ in the $C^\infty $ sense. The $S^1$-bundle corresponding to $N_P$ and $\tilde{N}_P$ is diffeomorphic to $\mathbb{S}^3 /Q_8 $ and the lens space $L(4,4) $, respectively. More details can be found in \cite{lawson1}.

Now we recalling the definition of Berger sphere. Here a Berger sphere is a standard $3$-sphere with Riemannian metric from a one-parameter family, which can be obtained from the standard metric by shrinking along fibers of a Hopf fibration $\mathbb{S}^3 \to S^2 $. Let $X_1$, $X_2$, $X_3$ be the left-invariant vector fields on $SU(2)$ spanned by 
$$	\begin{bmatrix} 
	\sqrt{-1} & 0 \\
	0 & -\sqrt{-1} \\
	\end{bmatrix},
	\quad
	\begin{bmatrix} 
	0 & 1 \\
	-1 & 0 \\
	\end{bmatrix},
	\quad
	\begin{bmatrix} 
	0 & \sqrt{-1} \\
	\sqrt{-1} & 0 \\
	\end{bmatrix} , $$
respectively. Then for any $t>0$, there exists a left-invariant metric $g_t$ on $SU(2)$ such that $t^{-1} X_1 $, $X_2$, $X_3$ is an orthonormal frame. By the diffeomorphism $\mathbb{S}^3 \to SU(2) ,\; (z,w) \mapsto \begin{bmatrix} 
	z & -\bar{w} \\
	w & \bar{z} \\
	\end{bmatrix} $, the left-invariant metric gives a Riemannian metric on $\mathbb{S}^3$, and the standard Hopf fibration $\mathbb{S}^3 \to S^2 $ is a Riemannian submersion. Let $\{ \sigma_i \}_{i=1}^3 $ be the the basis of left-invariant forms dual to $\{ X_i \}_{i=1}^3 $. See also \cite[Subsection 4.4.3]{pp1}.
	
	Let $\rho ,\phi $ be smooth nonnegative functions on $[0,\infty )$ such that $\rho ,\phi $ are positive on $(0,\infty )$,
	$$ \rho (0) = 1 ,\;\rho^{\mathrm{(odd)}} (0) =0 ,$$
and
    $$ \phi (0) =0 ,\; \phi' (0)=4 ,\; (\rho \phi)^{\mathrm{(even)}} (0) =0 .$$
    Then the construction
    \begin{eqnarray}\label{constructionO-4metric}
    \bigg( (0,\infty ) \times \mathbb{S}^3 , g_{\rho ,\phi} \bigg) = \bigg( (0,\infty ) \times \mathbb{S}^3 , dr^2 + \rho^2 (r) \left[ \phi^2 (\sigma^1)^2 + (\sigma^2)^2 + (\sigma^3)^2 \right] \bigg) .
    \end{eqnarray}
    yields a singular Riemannian metric on the complex blow-up space $\hat{\mathbb{C}}^2 \cong \mathrm{Total} ( \mathcal{O} (-1) ) $ of $\mathbb{C}^2$ at the origin $0$ with conical singularities of cone angle $8\pi $ along the exceptional divisor $\mathbb{C}P^1$, where $\mathrm{Total} (\mathcal{O} (-1))$ is the total space of $\mathcal{O} (-1) $. Roughly speaking, ``conical singularities with cone angle $\theta$" here means it looks like $\mathbb{C}\times C(\mathbb{S}^1_{\theta}) $ locally. Moreover, through the ramified covering map $\mathcal{O} (-1) \to \mathcal{O} (-4) $ induced by the Hopf sub-action of $\mathbb{Z}/4\mathbb{Z}$, one can see that (\ref{constructionO-4metric}) gives a smooth Riemannian metric on the total space of $\tilde{N}_P \cong \mathcal{O} (-4)$. Since the metric on Berger sphere is invariant under both the actions generated by Hopf sub-action $\mathbb{Z}/4\mathbb{Z}$ and the action $(z_1 ,z_2) \mapsto (\bar{z}_2 , -\bar{z}_1) $, one can see that the above construction gives a smooth Riemannian metric on the total space of $N_P$. Write $X_0 =\frac{\partial }{\partial r} $. Then the Ricci curvature of $g_{\rho ,\phi}$ can be expressed as following.

\begin{lmm}\label{lmmriccicurvature}
Let $\rho$, $\phi $ and $g_{\rho ,\phi}$ be as above. Then the Ricci curvature tensor of $g_{\rho ,\phi}$ can be determined by
\begin{eqnarray}
\Ric_{g_{\rho ,\phi}} \left(X_0 \right) & = & - (3\rho^{-1} \rho'' +\phi^{-1} \phi'' +2\rho^{-1} \phi^{-1} \rho'\phi' ) X_0 ,\\
\Ric_{g_{\rho ,\phi}} \left(X_1 \right) & = & - [\rho^{-1} \rho'' +\phi^{-1} \phi'' +4\rho^{-1} \phi^{-1} \rho'\phi' - 2\rho^{-2} \phi^2 + 2\rho^{-2} (\rho')^{2} ] X_1 ,\\
\Ric_{g_{\rho ,\phi}} \left(X_2 \right) & = & [- \rho^{-1} \rho'' - \rho^{-1} \phi^{-1} \rho'\phi' +4\rho^{-2} - 2\rho^{-2} \phi^2 - 2\rho^{-2} (\rho')^{2} ] X_2 ,\\
\Ric_{g_{\rho ,\phi}} \left(X_3 \right) & = & [- \rho^{-1} \rho'' - \rho^{-1} \phi^{-1} \rho'\phi' +4\rho^{-2} - 2\rho^{-2} \phi^2 - 2\rho^{-2} (\rho')^{2} ] X_3 .
\end{eqnarray}
\end{lmm}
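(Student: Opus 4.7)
The plan is a direct moving-frame calculation. Introduce the $g_{\rho,\phi}$-orthonormal coframe
$\theta^0 = dr$, $\theta^1 = \rho\phi\,\sigma^1$, $\theta^2 = \rho\,\sigma^2$, $\theta^3 = \rho\,\sigma^3$,
dual to the frame $E_0 = X_0$, $E_1 = (\rho\phi)^{-1}X_1$, $E_i = \rho^{-1}X_i$ for $i=2,3$. The matrix representatives of $X_1,X_2,X_3$ satisfy $[X_i,X_j] = 2X_k$ for $(i,j,k)$ a cyclic permutation of $(1,2,3)$, which translates into the Maurer--Cartan relations $d\sigma^1 = -2\sigma^2\wedge\sigma^3$ and cyclic. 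Expanding in the new coframe yields
\begin{align*}
d\theta^1 &= \tfrac{(\rho\phi)'}{\rho\phi}\,\theta^0\wedge\theta^1 \;-\; \tfrac{2\phi}{\rho}\,\theta^2\wedge\theta^3,\\
d\theta^2 &= \tfrac{\rho'}{\rho}\,\theta^0\wedge\theta^2 \;+\; \tfrac{2}{\rho\phi}\,\theta^1\wedge\theta^3,\\
d\theta^3 &= \tfrac{\rho'}{\rho}\,\theta^0\wedge\theta^3 \;-\; \tfrac{2}{\rho\phi}\,\theta^1\wedge\theta^2.
\end{align*}

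Cartan's first structural equation $d\theta^i = -\omega^i{}_j\wedge\theta^j$, together with antisymmetry $\omega^i{}_j + \omega^j{}_i = 0$, then determines the connection 1-forms. The radial coefficients give $\omega^1{}_0 = \tfrac{(\rho\phi)'}{\rho\phi}\theta^1$ and $\omega^a{}_0 = \tfrac{\rho'}{\rho}\theta^a$ for $a=2,3$, while a small linear system in the tangential directions yields $\omega^1{}_2 = -\tfrac{\phi}{\rho}\theta^3$, $\omega^1{}_3 = \tfrac{\phi}{\rho}\theta^2$, and $\omega^2{}_3 = \tfrac{\phi^2-2}{\rho\phi}\theta^1$. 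The tangential part coincides with the output of Koszul's formula applied directly to the Berger metric on $\{r\}\times\mathbb{S}^3$, a convenient cross-check.

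The second structural equation $\Omega^i{}_j = d\omega^i{}_j + \omega^i{}_k\wedge\omega^k{}_j$ produces the six independent curvature 2-forms, and the Ricci eigenvalue on $E_i$ is $\sum_{j\ne i}\Omega^i{}_j(E_i,E_j)$. By the $\mathbb{Z}/2$-symmetry $X_2\leftrightarrow X_3$ of $g_{\rho,\phi}$ the frame $(E_0,\dots,E_3)$ diagonalizes $\Ric_{g_{\rho,\phi}}$, and since each $X_i$ is parallel to $E_i$ the eigenvalues transfer verbatim to give the four formulas stated in the lemma. The main obstacle is purely algebraic bookkeeping: each $\Omega^i{}_j$ decomposes into an $r$-derivative piece from $d\omega^i{}_j$ together with quadratic cross-terms $\omega^i{}_k\wedge\omega^k{}_j$, and it is the latter---carrying the Berger coefficients $\phi/\rho$ and $(\phi^2-2)/(\rho\phi)$---whose interactions generate the $\pm 2\rho^{-2}\phi^2$ and $4\rho^{-2}$ contributions and are the most error-prone. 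I would finally verify the formulas in two limits: setting $\phi\equiv 1$ collapses $g_{\rho,\phi}$ to the warped product $dr^2 + \rho^2 g_{\mathbb{S}^3}$ and must recover the classical expressions $-3\rho''/\rho$ on $X_0$ and $-\rho^{-1}\rho'' + 2\rho^{-2}(1-(\rho')^2)$ on each $X_a$; the further specialization $\rho(r)=r$, $\phi\equiv 1$ must give $\Ric\equiv 0$, matching the Euclidean metric on $\mathbb{R}^4\setminus\{0\}$ in polar coordinates. Both checks confirm the stated formulas.
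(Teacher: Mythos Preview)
Your approach is essentially identical to the paper's: the paper introduces the same orthonormal frame $e_i=E_i$ and coframe $\omega^i=\theta^i$, computes the connection $1$-forms, applies Cartan's second structural equation to obtain the curvature $2$-forms $\Omega^i{}_j$, and reads off the Ricci components. The only cosmetic difference is that the paper first computes the Lie brackets $[e_i,e_j]$ and runs Koszul's formula to obtain $\nabla_{e_i}e_j$ (and hence the $\omega^i{}_j$), whereas you solve $d\theta^i=-\omega^i{}_j\wedge\theta^j$ directly from the Maurer--Cartan relations; your consistency checks at $\phi\equiv1$ and $(\rho,\phi)=(r,1)$ are a nice addition not present in the paper.
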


\begin{proof}
Set $e_0 =X_0 $, $e_1 = (\rho \phi)^{-1} X_1 $, $e_2 = \rho^{-1} X_2 $, $e_3 = \rho^{-1} X_3 $, $\omega^0 = dr $, $\omega^1 = \rho\phi\sigma^1 $, $\omega^2 =\rho \sigma^2 $, and $\omega^3 =\rho \sigma^3 $. Then $\{ e_0 ,e_1 ,e_2 ,e_3 \}$ is an orthonormal frame, and $\{ \omega^0 ,\omega^1 ,\omega^2 ,\omega^3 \}$ is the dual coframe.

Recall that $\left[ X_0 ,X_i \right] =0 $ and $\left[ X_i ,X_j \right] =2\epsilon_{ijk} X_k $, where $1\leq i,j,k\leq 3$, and $\epsilon_{ijk}$ is the Levi-Civita symbol. Then we have
\begin{eqnarray*}
\left[ e_0 ,e_1 \right] & = & \left[ X_0 ,\rho^{-1} \phi^{-1} X_1 \right] = -\left( \rho^{-1} \rho' +\phi^{-1} \phi' \right) e_1 ,\\
\left[ e_0 ,e_2 \right] & = & \left[ X_0 , \rho^{-1} X_2 \right] = - \rho^{-1} \rho' e_2 ,\\
\left[ e_0 ,e_3 \right] & = & \left[ X_0 ,\rho^{-1} X_3 \right] = - \rho^{-1} \rho' e_3 ,\\
\left[ e_1 ,e_2 \right] & = & \left[ \rho^{-1} \phi^{-1} X_1 , \rho^{-1} X_2 \right] = 2 \rho^{-1} \phi^{-1} e_3 ,\\
\left[ e_1 ,e_3 \right] & = & \left[ \rho^{-1} \phi^{-1} X_1 , \rho^{-1} X_3 \right] = -2 \rho^{-1} \phi^{-1} e_2 ,\\
\left[ e_2 ,e_3 \right] & = & \left[ \rho^{-1} X_2 ,\rho^{-1} X_3 \right] = 2 \rho^{-1} \phi e_1 .
\end{eqnarray*}

It follows that
\begin{eqnarray*}
\nabla_{e_0} e_0 & = & \nabla_{e_0} e_1 = \nabla_{e_0} e_2 =\nabla_{e_0} e_3 = 0 ,\\
\nabla_{e_1} e_0 & = & (\rho^{-1} \rho' +\phi^{-1} \phi' ) e_1 ,\\
\nabla_{e_1} e_1 & = & -(\rho^{-1} \rho' +\phi^{-1} \phi' ) e_0 ,\\
\nabla_{e_1} e_2 & = & (2\rho^{-1} \phi^{-1} - \rho^{-1} \phi ) e_3 ,\\
\nabla_{e_1} e_3 & = & (-2\rho^{-1} \phi^{-1} + \rho^{-1} \phi ) e_2 ,\\
\nabla_{e_2} e_0 & = & \rho^{-1} \rho' e_2 ,\\
\nabla_{e_2} e_1 & = & -\rho^{-1} \phi e_3 ,\\
\nabla_{e_2} e_2 & = & \nabla_{e_3} e_3 = -\rho^{-1} \rho' e_0 ,\\
\nabla_{e_2} e_3 & = & \rho^{-1} \phi e_1 ,\\
\nabla_{e_3} e_0 & = & \rho^{-1} \rho' e_3 ,\\
\nabla_{e_3} e_1 & = & \rho^{-1} \phi e_2 ,\\
\nabla_{e_3} e_2 & = & -\rho^{-1} \phi e_1 .
\end{eqnarray*}

Let $\nabla e_i = \omega_i^j e_j $. Then $\omega_i^j$ are $1$-forms, and we have Cartan's formulas $\omega_i^j =-\omega_j^i $, $d\omega^i =\omega^j \wedge \omega_j^i $, $\Omega_i^j = d\omega_i^j - \omega^k_i \wedge \omega^j_k  $ and $R_{g_{\rho ,\phi}} (X,Y)e_k = \Omega_k^j (X,Y) e_j $. Hence one can see that
\begin{eqnarray*}
\omega_0^1 & = & (\rho^{-1} \rho' +\phi^{-1} \phi' ) \omega^1 ,\\
\omega_0^2 & = & \rho^{-1} \rho' \omega^2 ,\\
\omega_0^3 & = & \rho^{-1} \rho' \omega^3 ,\\
\omega_1^2 & = & \rho^{-1} \phi \omega^3 ,\\
\omega_1^3 & = & -\rho^{-1} \phi \omega^2 ,\\
\omega_2^3 & = & (2\rho^{-1} \phi^{-1} + \rho^{-1} \phi ) \omega^1 ,
\end{eqnarray*}
\begin{eqnarray*}
d\omega^0 & = & 0 ,\\
d\omega^1 & = & (\rho^{-1} \rho' +\phi^{-1} \phi' ) \omega^0 \wedge \omega^1 - 2\rho^{-1} \phi \omega^2 \wedge \omega^3 ,\\
d\omega^2 & = & \rho^{-1} \rho' \omega^0 \wedge \omega^2 + 2\rho^{-1} \phi^{-1} \omega^1 \wedge \omega^3 ,\\
d\omega^3 & = & \rho^{-1} \rho' \omega^0 \wedge \omega^3 - 2\rho^{-1} \phi^{-1} \omega^1 \wedge \omega^2 ,
\end{eqnarray*}
and
\begin{eqnarray*}
\Omega_0^1 & = & d \omega^1_0 + \omega_i^1 \wedge \omega_0^i = \left( \rho^{-1} \rho'' +\phi^{-1} \phi'' + 2\rho^{-1}\phi^{-1} \rho' \phi' \right) \omega^0 \wedge \omega^1 - 2\rho^{-1} \phi' \omega^2 \wedge \omega^3 ,\\
\Omega_0^2 & = & d \omega^2_0 + \omega_i^2 \wedge \omega_0^i = \rho^{-1} \rho'' \omega^0 \wedge \omega^2 - \rho^{-1} \phi' \omega^1 \wedge \omega^3 ,\\
\Omega_0^3 & = & d \omega^3_0 + \omega_i^3 \wedge \omega_0^i = \rho^{-1} \rho'' \omega^0 \wedge \omega^2 + \rho^{-1} \phi' \omega^1 \wedge \omega^3 ,\\
\Omega_1^2 & = & d \omega^2_1 + \omega_i^2 \wedge \omega_1^i = - \left[ \rho^{-2} \phi^2 - \rho^{-2} (\rho'')^{2} -\rho^{-1} \phi^{-1} \rho' \phi' \right] \omega^1 \wedge \omega^2 + \rho^{-1} \phi' \omega^0 \wedge \omega^3 ,\\
\Omega_1^3 & = & d \omega^3_1 + \omega_i^3 \wedge \omega_1^i = - \left[ \rho^{-2} \phi^2 - \rho^{-2} (\rho'')^{2} -\rho^{-1} \phi^{-1} \rho' \phi' \right] \omega^1 \wedge \omega^3 - \rho^{-1} \phi' \omega^0 \wedge \omega^2 ,\\
\Omega_2^3 & = & d \omega^3_2 + \omega_i^3 \wedge \omega_2^i = -\left[ 4 \rho^{-2} -3 \rho^{-2} \phi^2 - \rho^{-2} (\rho')^{2} \right] \omega^2 \wedge \omega^3 - 2 \rho^{-1} \phi' \omega^0 \wedge \omega^1 .
\end{eqnarray*}

Since $R_{g_{\rho ,\phi}} (X,Y)e_k = \Omega_k^j (X,Y) e_j $, we have 
$$\Ric_{g_{\rho ,\phi}} (X) =\sum_k R_{g_{\rho ,\phi}}(X,e_k) e_k = \sum_{j,k} \Omega_k^j (X,e_k) e_j .$$ Then
\begin{eqnarray*}
\Ric_{g_{\rho ,\phi}} \left(X_0 \right) & = & - (3\rho^{-1} \rho'' +\phi^{-1} \phi'' +2\rho^{-1} \phi^{-1} \rho'\phi' ) X_0 ,\\
\Ric_{g_{\rho ,\phi}} \left(X_1 \right) & = & - [\rho^{-1} \rho'' +\phi^{-1} \phi'' +4\rho^{-1} \phi^{-1} \rho'\phi' - 2\rho^{-2} \phi^2 + 2\rho^{-2} (\rho')^{2} ] X_1 ,\\
\Ric_{g_{\rho ,\phi}} \left(X_2 \right) & = & [- \rho^{-1} \rho'' - \rho^{-1} \phi^{-1} \rho'\phi' +4\rho^{-2} - 2\rho^{-2} \phi^2 - 2\rho^{-2} (\rho')^{2} ] X_2 ,\\
\Ric_{g_{\rho ,\phi}} \left(X_3 \right) & = & [- \rho^{-1} \rho'' - \rho^{-1} \phi^{-1} \rho'\phi' +4\rho^{-2} - 2\rho^{-2} \phi^2 - 2\rho^{-2} (\rho')^{2} ] X_3 ,
\end{eqnarray*}
which establishes the formulas.
\end{proof}

Now we can prove the main result in this section.

\begin{prop}\label{propexamplesmoothingrhophiversion}
There exist a small constant $\varepsilon_0 >0$ and smooth nonnegative functions $\rho ,\phi $ on $[0,\infty )$ such that $\rho ,\phi $ are positive on $(0,\infty )$, $ \rho|_{[0,\varepsilon_0]} =1 $, $ \phi|_{[0,\varepsilon_0]} =4r $, $\rho' |_{[2,\infty)} =e^{-100} $, $\phi |_{[2,\infty)} =1 $, and $\Ric_{g_{\rho ,\phi} } \geq 0 $.
\end{prop}

\begin{proof}
At first, we can find a cut-off function $\eta \in C^\infty ( \mathbb{R} ) $ such that $0\leq \eta\leq 64$, $\mathrm{supp} (\eta) =\left[ 0,\frac{1}{4} \right] $, $ \int_0^1 \eta =4 $, $ \eta \geq 16 $ on $ \left[ \frac{1}{16} , \frac{3}{16} \right] $, and $ \eta'\leq 0 $ on $ \left[ \frac{1}{8} , \frac{1}{4} \right] $. Let $r_1 = \frac{1}{4} \int_0^{\frac{1}{4}} \left( \frac{1}{4} -s \right) \eta (s) ds \in \left( 0, \frac{1}{4} \right) $, and
\begin{eqnarray}\label{constructionphi}
\phi (r) = \left\{
\begin{aligned}
4r \;\;\;\;\;\;\;\;\;\;\;\;\;\;\;\;\;\;\; &, \; r\in [0,r_1] ;\\
4r - \int_{r_1}^r \int_0^{t-r_1} \eta (s) ds dt \; &, \; r\in (r_1 ,\infty) . 
\end{aligned}
\right.
\end{eqnarray}
Then we have $\phi' (r) =4 $ on $[0,r_1 ]$, $\phi' (r) = 0 $ on $\left[ \frac{1}{4} +r_1 ,\infty \right) $, $r_1 \geq \frac{1}{32} $, and
$$ \phi (r) \leq \phi \left( \frac{1}{4} +r_1 \right) = 1+ 4r_1 - \int_0^{\frac{1}{4}} \int_0^{t } \eta (s) ds dt = 1+ 4r_1 - \int_0^{\frac{1}{4}} \left( \frac{1}{4} -s \right) \eta (s) ds =1 . $$

Let $\delta = e^{-100} \left( \int_{0 }^r \int_{0 }^t \eta \left( 2s -\frac{1}{8} -2r_1 \right) ds dt \right)^{-1} \leq 32 e^{-100} $, and let 
\begin{eqnarray}\label{constructionrho}
\rho (r) = 1 + \delta \int_{0 }^r \int_{0 }^t \eta \left( 2s -\frac{1}{8} -2r_1 \right) ds dt .
\end{eqnarray}
Hence $\rho (r) =1 $ on $\left[ 0,r_1 +\frac{1}{16} \right] $, $\rho' (r) = e^{-100} $ on $\left[r_1 + \frac{3}{16} ,\infty \right) $, and $\rho'' \geq 0$. Now we only need to show that $\Ric (g_{\rho ,\phi} ) \geq 0 $ for such $\rho $ and $\phi $. By Lemma \ref{lmmriccicurvature}, we see that $\Ric_{g_{\rho ,\phi} } \geq 0 $ if and only if $\Ric_{g_{\rho ,\phi}} (X_0 ,X_0 ) \geq 0 $, $\Ric_{g_{\rho ,\phi}} (X_1 ,X_1 ) \geq 0 $, $\Ric_{g_{\rho ,\phi}} (X_2 ,X_2 ) \geq 0 $ and $\Ric_{g_{\rho ,\phi}} (X_3 ,X_3 ) \geq 0 $.

Our proof will be divided into several parts according to the value of $r$.

\smallskip

\par {\em Part 1.} In this part, we will prove that $\Ric_{g_{\rho ,\phi} } \geq 0 $ when $r\in \left[ 0,r_1 +\frac{1}{16} \right] $.

When $r\in \left[ 0,r_1 +\frac{1}{16} \right] $, we have $\rho =1$. Hence
\begin{eqnarray*}
\Ric_{g_{\rho ,\phi}} \left(X_0 ,X_0 \right) & = & - (3\rho^{-1} \rho'' +\phi^{-1} \phi'' +2\rho^{-1} \phi^{-1} \rho'\phi' ) g_{\rho,\phi} (X_0 ,X_0) =  - \phi^{-1} \phi'' \\
& = & \phi^{-1} \eta (r-r_1 ) \geq 0 , \\
\Ric_{g_{\rho ,\phi}} \left(X_1 ,X_1 \right) & = & - [\rho^{-1} \rho'' +\phi^{-1} \phi'' +4\rho^{-1} \phi^{-1} \rho'\phi' - 2\rho^{-2} \phi^2 + 2\rho^{-2} (\rho')^{2} ] g_{\rho ,\phi} \left(X_1 ,X_1 \right) \\
& = & 2 \phi^4 - \rho^2\phi\phi'' = 2 \phi^4 + \rho^2\phi\eta (r-r_1 ) \geq 0 ,
\end{eqnarray*}
and
\begin{eqnarray*}
\Ric_{g_{\rho ,\phi}} \left(X_2 ,X_2 \right) & = & \Ric_{g_{\rho ,\phi}} \left(X_3 ,X_3 \right) \\
& = & [- \rho^{-1} \rho'' - \rho^{-1} \phi^{-1} \rho'\phi' +4\rho^{-2} - 2\rho^{-2} \phi^2 - 2\rho^{-2} (\rho')^{2} ] g_{\rho ,\phi} \left(X_3 ,X_3 \right) \\
& = & \left( 4\rho^{-2} - 2 \rho^{-2} \phi^2 \right) \rho^2 \geq 2. 
\end{eqnarray*}
It follows that if $r\in \left[ 0,r_1 +\frac{1}{16} \right] $, then $\Ric_{g_{\rho ,\phi} } \geq 0 $.

\smallskip

\par {\em Part 2.} In this part, we will prove that $\Ric_{g_{\rho ,\phi} } \geq 0 $ when $r\in \left[ r_1 +\frac{1}{16} , r_1 +\frac{3}{16} \right] $.

In this case, we have $1\leq \rho (r) \leq 2 $, $0\leq\rho' (r) \leq e^{-100} $, $0\leq \rho'' (r) = \delta \eta \left( 2s -\frac{1}{8} -2r_1 \right) \leq 64\delta $, $4r_1 \leq \phi (r) \leq 1 $, $0\leq \phi' (r) \leq 4 $, and $\phi'' (r) = -\eta (r-r_1 ) \leq -16 $. Then
\begin{eqnarray*}
\Ric_{g_{\rho ,\phi}} \left(X_0 ,X_0 \right) & = & - (3\rho^{-1} \rho'' +\phi^{-1} \phi'' +2\rho^{-1} \phi^{-1} \rho'\phi' ) g_{\rho,\phi} (X_0 ,X_0)  \\
& \geq & -3\cdot 64\delta + 16 - 2\cdot \frac{1}{4r_1} \cdot e^{-100} \cdot 4 \geq 16 - e^{-80} > 0 , \\
\Ric_{g_{\rho ,\phi}} \left(X_1 ,X_1 \right) & = & - [\rho^{-1} \rho'' +\phi^{-1} \phi'' +4\rho^{-1} \phi^{-1} \rho'\phi' - 2\rho^{-2} \phi^2 + 2\rho^{-2} (\rho')^{2} ] g_{\rho ,\phi} \left(X_1 ,X_1 \right) \\
& \geq & - \left( 64\delta -16 +4\cdot \frac{1}{4r_1} \cdot e^{-100} \cdot 4 - 2\rho^{-2} \phi^2 + 2e^{-200} \right) \rho^2 \phi^2 >0 ,
\end{eqnarray*}
and
\begin{eqnarray*}
\Ric_{g_{\rho ,\phi}} \left(X_2 ,X_2 \right) & = & \Ric_{g_{\rho ,\phi}} \left(X_3 ,X_3 \right) \\
& = & [- \rho^{-1} \rho'' - \rho^{-1} \phi^{-1} \rho'\phi' +4\rho^{-2} - 2\rho^{-2} \phi^2 - 2\rho^{-2} (\rho')^{2} ] g_{\rho ,\phi} \left(X_3 ,X_3 \right) \\
& \geq & \left( - 64\delta - \frac{1}{4r_1} \cdot e^{-100} \cdot 4 +4\rho^{-2} - 2\rho^{-2} - 2\rho^{-2} e^{-200} \right) \rho^2 > 0 . 
\end{eqnarray*}
It shows that $\Ric_{g_{\rho ,\phi} } \geq 0 $ for $r\in \left[ r_1 +\frac{1}{16} , r_1 +\frac{3}{16} \right] $.

\par {\em Part 3.} In this part, we will prove that $\Ric_{g_{\rho ,\phi} } \geq 0 $ when $r\in \left[ r_1 +\frac{3}{16} , r_1 +\frac{1}{4} \right] $.

Since $r\in \left[ r_1 +\frac{3}{16} , r_1 +\frac{1}{4} \right] $, we have $1\leq \rho (r) \leq 2 $, $\rho' (r)= e^{-100} $, $4r_1 \leq \phi (r) \leq 1 $, $\phi'' (r) \leq 0 $ and $\phi'''(r) = -\eta' (r-r_1 ) \geq 0 $. Hence $\phi'' (r) $ is increasing, and
$$0\leq \phi' (r) = -\int_{r}^{r_1 +\frac{1}{4}} \phi'' (t) dt \leq -\int_{r}^{r_1 +\frac{1}{4}} \phi'' (r) dt \leq -\frac{\phi'' (r)}{16} .$$
It follows that
\begin{eqnarray*}
\Ric_{g_{\rho ,\phi}} \left(X_0 ,X_0 \right) & = & - (3\rho^{-1} \rho'' +\phi^{-1} \phi'' +2\rho^{-1} \phi^{-1} \rho'\phi' ) g_{\rho,\phi} (X_0 ,X_0)  \\
& \geq & - \left( \phi'' +2\cdot \frac{1}{4r_1} \cdot e^{-100} \phi' \right) \geq - \left( \phi'' + \phi' \right) \geq 0 , \\
\Ric_{g_{\rho ,\phi}} \left(X_1 ,X_1 \right) & = & - [\rho^{-1} \rho'' +\phi^{-1} \phi'' +4\rho^{-1} \phi^{-1} \rho'\phi' - 2\rho^{-2} \phi^2 + 2\rho^{-2} (\rho')^{2} ] g_{\rho ,\phi} \left(X_1 ,X_1 \right) \\
& \geq & - \left( \phi'' +4\cdot \frac{1}{4r_1} \cdot e^{-100} \phi' - 2\rho^{-2} \cdot 16 r_1^2 + 2\rho^{-2} e^{-200} \right) \rho^2 \phi^2 \geq 0 ,
\end{eqnarray*}
and
\begin{eqnarray*}
\Ric_{g_{\rho ,\phi}} \left(X_2 ,X_2 \right) & = & \Ric_{g_{\rho ,\phi}} \left(X_3 ,X_3 \right) \\
& = & [- \rho^{-1} \rho'' - \rho^{-1} \phi^{-1} \rho'\phi' +4\rho^{-2} - 2\rho^{-2} \phi^2 - 2\rho^{-2} (\rho')^{2} ] g_{\rho ,\phi} \left(X_3 ,X_3 \right) \\
& \geq & \left( - \frac{1}{4r_1} \cdot e^{-100} \cdot 4 +4\rho^{-2} - 2\rho^{-2} - 2\rho^{-2} e^{-200} \right) \rho^2 > 0 . 
\end{eqnarray*}

\par {\em Part 4.} In this part, we will prove that $\Ric_{g_{\rho ,\phi} } \geq 0 $ when $r\in \left[ r_1 +\frac{1}{4} ,\infty \right] $.

By definition, $r\in \left[ r_1 +\frac{1}{4} ,\infty \right] $ implies that we have $ \rho (r) \geq 1 $, $\rho' (r)= e^{-100} $, and $ \phi (r) = 1 $.
It follows that
\begin{eqnarray*}
\Ric_{g_{\rho ,\phi}} \left(X_0 ,X_0 \right) & = & - (3\rho^{-1} \rho'' +\phi^{-1} \phi'' +2\rho^{-1} \phi^{-1} \rho'\phi' ) g_{\rho,\phi} (X_0 ,X_0) =0 \\
\Ric_{g_{\rho ,\phi}} \left(X_1 ,X_1 \right) & = & - [\rho^{-1} \rho'' +\phi^{-1} \phi'' +4\rho^{-1} \phi^{-1} \rho'\phi' - 2\rho^{-2} \phi^2 + 2\rho^{-2} (\rho')^{2} ] g_{\rho ,\phi} \left(X_1 ,X_1 \right) \\
& \geq & - \left( -2\rho^{-2} + 2e^{-200} \rho^{-2} \right) \rho^2 = 2 - 2e^{-200} >0 ,
\end{eqnarray*}
and
\begin{eqnarray*}
\Ric_{g_{\rho ,\phi}} \left(X_2 ,X_2 \right) & = & \Ric_{g_{\rho ,\phi}} \left(X_3 ,X_3 \right) \\
& = & [- \rho^{-1} \rho'' - \rho^{-1} \phi^{-1} \rho'\phi' +4\rho^{-2} - 2\rho^{-2} \phi^2 - 2\rho^{-2} (\rho')^{2} ] g_{\rho ,\phi} \left(X_3 ,X_3 \right) \\
& \geq & \left( 4\rho^{-2} - 2\rho^{-2} - 2\rho^{-2} e^{-200} \right) \rho^2 = 2 - 2e^{-200} >0 . 
\end{eqnarray*}
This completes the proof.
\end{proof}

As a consequence, we can prove the following Proposition.

\begin{prop}\label{propexamplesmoothing}
There exist a $4$-dimensional Riemannian manifold $(M^4 ,g) $ and a compact subset $K\subset M^4 $ such that $M^4$ is diffeomorphic to an open subset of $\mathbb{R}^4 $, and $ (M^4 \setminus K ,g) $ is isometric to the annulus $A_{e^{200},\infty }  \subset C(\mathbb{S}^3 /Q_8 ) $, where the metric on $\mathbb{S}^3 / Q_8 $ is given by scaling of a standard $3$-sphere, $ C(\mathbb{S}^3 /Q_8 ) $ is the metric cone over $ \mathbb{S}^3 /Q_8 $, and $A_{r_1 ,r_2 }$ is the annulus $B_{r_2} (o) \setminus B_{r_1} (o) $.
\end{prop}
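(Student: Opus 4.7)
The plan is to realize the metric $g_{\rho,\phi}$ from Proposition \ref{propexamplesmoothingrhophiversion} on the total space of the normal bundle $N_P \to P$, and, after a linear change of the radial coordinate, to read off the end $\{r\geq 2\}$ as a truncated metric cone over $\mathbb{S}^3/Q_8$.

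First, I would take $M^4$ to be the total space of $N_P$. Since $N_P$ is diffeomorphic to an open tubular neighborhood of $P = f(\mathbb{R}P^2)$ in $\mathbb{R}^4$, it is diffeomorphic to an open subset of $\mathbb{R}^4$. The boundary conditions on $(\rho,\phi)$ furnished by Proposition \ref{propexamplesmoothingrhophiversion} --- namely $\rho(0)>0$, $\rho^{\mathrm{(odd)}}(0)=0$, $\phi(0)=0$, $\phi'(0)=4$, $(\rho\phi)^{\mathrm{(even)}}(0)=0$ --- are precisely the conditions under which the construction \eqref{constructionO-4metric} extends smoothly across the zero section of $\tilde N_P\cong \mathcal{O}(-4)$; by $Q_8$-invariance of the Berger family, the construction descends to a smooth Riemannian metric $g = g_{\rho,\phi}$ on $N_P = M^4$.

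Next, I would pass to the end. For $r\geq 2$ one has $\phi \equiv 1$, so the Berger twist disappears and each cross-section is a round $\mathbb{S}^3/Q_8$, while $\rho' \equiv e^{-100}$, so $\rho(r) = \rho(2) + e^{-100}(r-2)$ is affine. Setting $t := e^{100}\rho(r)$ gives $dt = dr$ and
\[
dr^2 + \rho(r)^2\bigl[(\sigma^1)^2 + (\sigma^2)^2 + (\sigma^3)^2\bigr] \;=\; dt^2 + t^2 h,
\]
where $h := e^{-200}\bigl[(\sigma^1)^2+(\sigma^2)^2+(\sigma^3)^2\bigr]$ is a fixed scaling of the standard round metric on $\mathbb{S}^3/Q_8$; this is literally the metric cone $C(\mathbb{S}^3/Q_8)$ in the $t$-coordinate.

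Finally, I would pick the unique $r_*\geq 2$ for which $e^{100}\rho(r_*) = e^{200}$ --- it exists and is finite because $\rho$ grows linearly with slope $e^{-100}$ on $[2,\infty)$ --- and set $K := \{r\leq r_*\}$. Then $K\subset M^4$ is compact, and $r\mapsto t = e^{100}\rho(r)$ is an isometry from $(M^4\setminus K, g)$ onto the annulus $A_{e^{200},\infty}\subset C(\mathbb{S}^3/Q_8)$. There is no serious analytic obstacle; the only real subtlety is the bookkeeping of matching the cross-sectional scale and the inner radius $e^{200}$, and this is precisely why the value $\rho' = e^{-100}$ was forced in the construction of Proposition \ref{propexamplesmoothingrhophiversion}.
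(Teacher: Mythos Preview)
Your proposal is correct and follows essentially the same route as the paper: realize $g_{\rho,\phi}$ on the total space of $N_P$ (which is diffeomorphic to a tubular neighborhood of $P$ in $\mathbb{R}^4$), and on the region $\{r\geq 2\}$ where $\phi\equiv 1$ and $\rho$ is affine with slope $e^{-100}$, identify the metric with a truncated cone via the substitution $t=e^{100}\rho(r)$. The only cosmetic difference is that the paper simply takes $K=\{r\leq 2\}$, obtaining an inner cone radius $R=e^{100}\rho(2)\leq 2e^{100}<e^{200}$ (so the stated annulus $A_{e^{200},\infty}$ is contained in the end), whereas you choose $r_*$ so that the inner radius is exactly $e^{200}$; either way yields the assertion.
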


\begin{proof}
At first, we can find a tubular neighborhood $U$ of $P = f( \mathbb{R}P^2 ) \subset \mathbb{R}^4 $ such that $U$ is diffeomorphic to the total space of the normal bundle $N_P$. By the argument above we see that there exist a Riemannian metric $g_{\rho,\phi}$ on $N_p$ and a compact subset $K=\{ r\leq 2 \}$ such that $ (N_p \setminus K ,g_{\rho ,\phi} ) $ is isometric to the annulus $A_{R,\infty }  \subset C(\mathbb{S}^3 /Q_8 ) $ for some $R\leq 2e^{100}$, where the metric on $\mathbb{S}^3 / Q_8 $ is given by $e^{-100} g_{\mathbb{S}^3} $, $g_{\mathbb{S}^3}$ is the standard metric on $3$-sphere, $ C(\mathbb{S}^3 /Q_8 ) $ is the metric cone over $ \mathbb{S}^3 /Q_8 $, and $A_{r_1 ,r_2 }$ is the annulus $B_{r_2} (o) \setminus B_{r_1} (o) $. This is our assertion.
\end{proof}

\section{A sequence of topological torus }
\label{sectionsequencetorus}

At first, we prove Proposition \ref{propRnmetric}. For convenience, we restate Proposition \ref{propRnmetric} here.

\begin{prop}[=Proposition \ref{propRnmetric}]\label{propexamplesequencetorus}
Let $n\geq 4$. Then there exists a sequence of smooth Riemannian metrics $g_i $ on $\mathbb{R}^n$ such that:
\begin{enumerate}[1)]
\item $g_i = g_{\rm Euc} $ outside the Euclidean unit ball $B_1 (0) \subset \mathbb{R}^n $,
\item $\Ric_{g_i} \geq -\Lambda $ and $ {\rm diam} \left( B_1 (0) ,g_i \right) \leq D $ for some $\Lambda ,D>0$ independent of $i$,
\item The pointed Gromov-Hausdorff limit $(X,d)$ of $(\mathbb{R}^n ,g_i) $ is a topological orbifold but not a topological manifold.
\end{enumerate}
Furthermore, every singular point of $X$ has a neighborhood that is homeomorphic to $ (\mathbb{R}^4 /Q_8 ) \times \mathbb{R}^{n-4} $.
\end{prop}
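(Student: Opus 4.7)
I focus on $n=4$; for $n\geq 5$, the example is obtained by placing the $4$-dimensional one $g_i^{(4)}$ (supported in a small ball $B_{1/4}^4$) inside $\mathbb{R}^n=\mathbb{R}^4\times\mathbb{R}^{n-4}$ and damping the perturbation $g_i^{(4)}-g_{\Euc}^{(4)}$ by a bump function in the transverse directions, which preserves all the required properties up to a harmless additive constant in the Ricci lower bound.

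For $n=4$, the strategy is to construct the sequence on a \emph{fixed} Euclidean tubular neighborhood $U=N_\eta(P)$ of a \emph{fixed} embedded copy of $\mathbb{R}P^2\subset B_{1/2}(0)$, with all $g_i$ agreeing with one another (and with a smooth extension of $g_{\Euc}$) on a fixed collar near $\partial U$, so that they agree with $g_{\Euc}$ on $\mathbb{R}^4\setminus U$ and hence outside $B_1(0)$. On $U$, identified with a disc subbundle of $N_P$, set $g_i$ to be the warped product \eqref{constructionO-4metric} with smooth $\rho_i,\phi_i:[0,\eta]\to[0,\infty)$ such that: (a) at $r=0$, the smoothness conditions of Proposition \ref{propexamplesmoothingrhophiversion} hold with $\rho_i(0)=\varepsilon_i\to 0$, so that the induced metric on $P$ is $\varepsilon_i^2 g_P$ and $P$ collapses to a point in the limit; (b) on a fixed collar $\{r_0\leq r\leq\eta\}$ independent of $i$, the pair $(\rho_i,\phi_i)$ coincides with a single fixed pair $(\rho_*,\phi_*)$ chosen so that the warped form agrees smoothly there with the pullback of $g_{\Euc}$ under the Euclidean exponential of $P$. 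Then $g_i$ is $C^\infty$ on $\mathbb{R}^4$ and equals $g_{\Euc}$ outside $B_1(0)$, with the $i$-dependence confined to the inner region $\{r<r_0\}$.

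The Ricci bound follows from Lemma \ref{lmmriccicurvature} via a piecewise design of $(\rho_i,\phi_i)$ on $[0,r_0]$ in the spirit of Proposition \ref{propexamplesmoothingrhophiversion}: one takes $\rho_i$ constantly equal to $\varepsilon_i$ on an initial sub-interval (killing the dangerous $\rho^{-1}\rho''$-term there) before bridging to $\rho_*$ on a fixed-scale interval (where $\rho^{-1}$ is already of order $1$); one chooses $\phi_i$ to stay below $\sqrt{2}$ so that $\Ric(X_2,X_2)=2\rho_i^{-2}(2-\phi_i^2)\geq 0$ throughout; and one controls the cross-terms $\phi^{-1}\phi''$ and $\rho^{-1}\phi^{-1}\rho'\phi'$. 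This yields $\Ric_{g_i}\geq-\Lambda$ for a fixed $\Lambda>0$. The diameter of $B_1(0)$ is uniformly bounded: the inner region has diameter $O(\varepsilon_i)$, while the collar together with the Euclidean exterior of $U$ inside $B_1$ has fixed bounded diameter.

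As $\varepsilon_i\to 0$, the inner region collapses to a single point while the collar and exterior remain unchanged, so the pointed GH limit is the topological quotient of $\mathbb{R}^4$ that collapses $P$ to a point, with the Euclidean metric off the collapsed point. Since $\partial N_\eta(P)\cong\mathbb{S}^3/Q_8$, a neighborhood of the collapsed point is homeomorphic to $C(\mathbb{S}^3/Q_8)\cong\mathbb{R}^4/Q_8$, giving a topological orbifold singularity that is not a manifold point. The main obstacle is the piecewise design on $[0,r_0]$: at small $r$, the factor $\rho_i^{-1}$ is of order $1/\varepsilon_i$, so the cross-terms in Lemma \ref{lmmriccicurvature} can blow up unless the design is arranged carefully; the positive contribution $4\rho^{-2}$ in $\Ric(X_2,X_2)$ and the freedom to choose the bridge between the initial sub-interval and the fixed collar are what make the uniform lower bound achievable, along the same lines as the four-part argument in Proposition \ref{propexamplesmoothingrhophiversion}.
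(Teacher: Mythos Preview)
Your overall picture—collapse an embedded $\mathbb{R}P^2\subset B_{1/2}(0)$ by a family of warped metrics on its tubular neighbourhood—is the right one, but the gluing step contains a genuine error. The warped form \eqref{constructionO-4metric} can \emph{never} agree with the Euclidean metric on a collar $\{r_0\le r\le\eta\}$: feeding the flatness condition into the curvature computation of Lemma~\ref{lmmriccicurvature} forces $\phi'\equiv 0$, $\rho''\equiv 0$ and $(\rho')^2=\phi^2=1$, i.e.\ the metric is a piece of the flat cone $C(\mathbb{S}^3/Q_8)$; but then the smoothing condition $\phi'(0)=4/\rho(0)$ fails, so no such metric extends over the zero section $P$. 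Equivalently, the Euclidean tubular neighbourhood of the Veronese $\mathbb{R}P^2$ is simply not cohomogeneity-one of this type. Hence there is no pair $(\rho_*,\phi_*)$ with the property you want, and an extra interpolation step between the warped metric and $g_{\Euc}$ is unavoidable. A second, related problem is the ``bridge on a fixed-scale interval'': once $\phi$ is near $1$ the formula gives $\Ric(X_0,X_0)\approx -3\rho^{-1}\rho''$, and the comparison $\rho''\le\tfrac{\Lambda}{3}\rho$ with $\rho(a)=\varepsilon_i$, $\rho'(a)=0$ forces any interval on which $\rho$ climbs to a fixed positive value to have length $\gtrsim\log(1/\varepsilon_i)\to\infty$.

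The paper sidesteps both issues by separating the construction into two scale-independent pieces. First, Proposition~\ref{propexamplesmoothingrhophiversion} produces a \emph{single} metric $g$ on $N_P$ with $\Ric_g\ge 0$ that is an exact cone $C(\mathbb{S}^3/Q_8)$ outside a compact $K$. One then takes $\epsilon^2 g$ and pulls it back by a diffeomorphism that is the cone dilation $r\mapsto \epsilon^{-1}r$ on an outer annulus; cone self-similarity makes the resulting metric on that fixed annulus literally equal to the cone metric for every $\epsilon$, while on the shrinking inner part the condition $\Ric\ge 0$ (being scale-invariant) survives intact. Only then does one glue this fixed conical collar to a fixed ambient metric $g_T$ (taken Euclidean outside $B_1$) by a cutoff $\eta g_{U,\epsilon}+(1-\eta)g_T$; since nothing on $\mathrm{supp}\,d\eta$ depends on $\epsilon$, the Ricci lower bound there is uniform. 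In your language, the ``bridge'' is the exactly conical segment $\rho_i(r)=e^{-100}r+O(\varepsilon_i)$ on which $\rho''=0$, while all the nontrivial behaviour of $(\rho_i,\phi_i)$ is squeezed into the shrinking interval $[0,2\varepsilon_i]$ where one relies on $\Ric\ge 0$ rather than on $\rho^{-1}$ being bounded.
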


\begin{proof}
Since $\mathbb{S}^{n-4} \times \mathbb{R}^{4} $ is diffeomorphic to a bounded domain in $\mathbb{R}^n$, one can assume that $n=4$. Before discussing technical details, we would like to explain the idea. In Section \ref{sectionconesmoothing}, we constructed a metric with non-negative Ricci curvature on the normal bundle of $\mathbb{R}P^2$ in $\mathbb{R}^4$, which is asymptotic to a smooth cone $C(\mathbb{S}^3_\delta /Q_8)$ for some $\delta >0$, where $\mathbb{S}_\delta^3 $ is $\mathbb{S}^3$ equipped with a Riemannian metric of constant sectional curvature $\delta^{-2}$. Now, we directly apply this result to the tubular neighborhood of $\mathbb{R}P^2$, to construct a sequence of metrics such that the tubular neighborhood converges in the Gromov-Hausdorff sense to a cone, while $\mathbb{R}P^2$ collapses to a point.

By Proposition \ref{propexamplesmoothing}, we can find a $4$-dimensional Riemannian manifold $(M^4 ,g) $ and a compact subset $K\subset M^4 $ such that there exist a diffeomorphism $F_1: M^4\to U \subset \mathbb{R}^{4} $ and an isomorphism $F_2 : (M^4 \setminus K ,g) \to (A_{e^{200},\infty } ,dr^2 + r^2 g_{\mathbb{S}^3 /Q_8 } ) \subset (C(\mathbb{S}^3 /Q_8 ) ,d) $, where $ (C(\mathbb{S}^3 /Q_8 ) ,d) $ is a metric cone over $(\mathbb{S}^3 /Q_8 ,g_{\mathbb{S}^3 /Q_8 } ) $, $g_{\mathbb{S}^3 /Q_8 } $ is a smooth metric on $ \mathbb{S}^3 /Q_8 $, and $A_{r_1 ,r_2 }$ is the annulus $B_{r_2} (o) \setminus B_{r_1} (o) $.

Clearly, for any $R_1 >R_2 \geq e^{200}$, we can find a diffeomorphism 
$$F_{R_1 ,R_2} : K\cup F_2^{-1} (A_{e^{200} ,e^4 R_1}) \to K\cup F_2^{-1} (A_{e^{200} ,e^4 R_2}) ,$$
such that 
$$ F_2 \circ F_{R_1 ,R_2} \circ F_2^{-1} : A_{R_1 ,e^4 R_1} \to A_{R_2 ,e^4 R_2} $$
is the standard transformation $(r,w)\mapsto (R_1^{-1} R_2 r,w ) $ on the metric cone $ (C(\mathbb{S}^3 /Q_8 ) ,d) $.

Now we consider the Riemannian manifolds $(M^4 , \epsilon^2 g) $ for any $\epsilon \in (0,1) $. Then 
$$g_{U,\epsilon} = \epsilon^2 (F^{-1}_{\epsilon^{-1} e^{201} ,e^{201}} \circ F^{-1}_1 )^{*} g $$ 
gives a Riemannian metric on $F_1 (K\cup F^{-1}_2 ( A_{e^{200} ,e^{205} } ) ) $, and one can see that the restriction of $g_{U,\epsilon}$ on $F_1 \circ F^{-1}_2 ( A_{e^{201} ,e^{205} } ) $ is independent of $\epsilon $. Let $ \eta $ be a smooth function on $\mathbb{R}^{4} $ such that $0\leq \eta \leq 1$, $\eta =1 $ on $F_1 (K\cup F^{-1}_2 ( A_{e^{200} ,e^{204.1} } ) ) $, and $\eta =0 $ outside $F_1 (K\cup F^{-1}_2 ( A_{e^{200} ,e^{204.9} } ) ) $. Fix a smooth Riemannian metric $g_T $ on $\mathbb{R}^{4} $. Set $g_{\epsilon } = \eta g_{U,\epsilon} + (1-\eta ) g_T $. Then the metrics $(\mathbb{R}^{4} ,g_{\epsilon } ) $ have a Ricci lower bound independent of $\epsilon$. Since the Riemannian manifold $(F^{-1}_2 ( A_{ e^{200} , \epsilon^{-1 } R_1 } ) ,\epsilon^2 g) $ is isomorphic to $ (A_{  e^{200} , \epsilon^{-1 } R_1 }  ,\epsilon^2 dr^2 + \epsilon^2 r^2 g_{\mathbb{S}^3 /Q_8 } ) \cong (A_{ \epsilon e^{200} ,R_1 } ,dr^2 + r^2 g_{\mathbb{S}^3 /Q_8 } )$, we see that the diameters ${\rm diam}_{g_\epsilon } ( K\cup F^{-1}_2 ( A_{e^{200} , \epsilon^{-1}  e^{202} } ) ) $ have an upper bound independent of $\epsilon$. Let $\epsilon \to 0$. By choosing subsequence if necessary, we can find $\epsilon_i \to 0 $, and the manifolds $(N^4 ,g_{\epsilon_i } ) $ converges to a metric space $(X,d_X)$ as $i\to\infty$. Let $d_i $ be the metric on $\mathbb{R}^{4} $ given by $g_{\epsilon_i}$, and $d_{M ,i}$ be the metric on $M^4$ given by $\epsilon^2_i g $.

Clearly, we have $F_1^* d_{i } = d_{M ,i}  $ on $K\cup F^{-1}_2 ( A_{ e^{200} , e^{202 } } ) $. Then $\left( K\cup F^{-1}_2 ( A_{ e^{200} , e^{202 } } ) , F_1^* d_i \right) $ converge to $ B_{e^{202}} (o) \subset (C(\mathbb{S}^3 /Q_8 ) ,d) $ as $i\to\infty$. Note that for any $x\in K$, the pointed Riemannian manifolds $(M^4 ,\epsilon_i^2 g ,x)$ converge to $(C(\mathbb{S}^3 /Q_8 ) ,d ,o) $ in the pointed Gromov-Hausdorff sense. Since the restriction of $g_{\epsilon_i} $ on $\mathbb{R}^{4} \setminus F_1 ( K\cup F^{-1}_2 ( A_{ e^{200} , e^{201 } } ) $ is independent of $i$ for sufficiently large $i$, we see that the restriction of $d_{i} $ on $\mathbb{R}^{4} \setminus F_1 ( K\cup F^{-1}_2 ( A_{ e^{200} , e^{202 } } ) $ is also independent of $i$ for sufficiently large $i$, where $A_{r_1 ,r_2 }$ is the annulus $B_{r_2} (o) \setminus B_{r_1} (o) $. 

Now we can find a point $y$ in $X$ such that there exists an open neighborhood of $y$ homeomorphic to $C(\mathbb{S}^3 /Q_8 ) $. Assume that the image of $y$ in $C(\mathbb{S}^3 /Q_8 ) $ is the vertex. Then for any open neighborhood $U$ of $y$, $\pi_1 (U \setminus \{ y\} ) \neq 0 $. It follows that $X$ is not a topological manifold.
\end{proof}

As a consequence, we can prove Theorem \ref{thmexamplesequencetorus}.

\vspace{0.2cm}

\noindent \textbf{Proof of Theorem \ref{thmexamplesequencetorus}: }
Let $(\mathbb{R}^{4} ,\tilde{g}_i)$ be the sequence of Riemannian metrics in Proposition \ref{propexamplesequencetorus}, and let $ g_{T^j} $ be the flat metric on $T^j$ given by standard $\mathbb{R}^j /\mathbb{Z}^j $. Since $\tilde{g}_i = g_{\rm Euc} $ outside the Euclidean unit ball $B_1 (0) \subset \mathbb{R}^n $, we see that $\tilde{g}_i$ gives a sequence of Riemannian metrics on the torus $T^4 \cong [-D-2 ,D+2]^4 / \thicksim $, and the Gromov-Hausdorff limit $(X,d)$ of $(T^{4} ,\tilde{g}_i)$ is a topological orbifold, and there exists an open neighborhood of $x\in X$ homeomorphic to $C(\mathbb{S}^3 /Q_8)$.

Then the sequence of Riemannian manifolds 
$$(T^4 \times T^{k-4} \times T^{n-k} ,g_i + g_{T^{k-4}} + i^{-1} g_{T^{n-k}} )$$ 
converges to $ X\times T^{k-4} $ in the Gromov-Hausdorff sense, which proves this Theorem.
\qed

\section{\texorpdfstring{$4$}{Lg}-dimensional case }
\label{4dimcase}

Let $(M_i^4 , g_i) \xrightarrow{GH} (X^k,d_X)$ satisfy ${\rm Ric}_{g_i} \ge -3$ and $ {\rm diam} (M_i^4 , g_i) \le D $, where $D>0$ is a constant independent of $i$, and $k\leq 4$ is the rectifiable dimension of $X$. Assume that $M^4_i$ are all homeomorphic to tori $T^4$.

\subsection{Ricci curvature bounded case}

At first, we recall the topological classification of spherical hypersurfaces in $\mathbb{R}^4$.

\begin{thm}[{\cite[Theorem 2.2]{crisphillman1}}]\label{sphericalhyper}
Let $\Gamma \leq O(4)$ be a discrete subgroup acting freely on $\mathbb{S}^3$. If there exists a locally flat embedding of $\mathbb{S}^3 / \Gamma $ into $\mathbb{S}^4 $, then $\Gamma\cong \{ \mathrm{id} \} $, $ Q_8 $ or $ I^* $, where $ Q_8 $ is the quaternion group, and $I^*$ is the binary icosahedral group. Here locally flat means that for any point $p$ of the image $N$, there exists an open neighborhood $U$ of $p$ such that the pair $(U,U\cap N)$ is homeomorphic to $(\mathbb{R}^4,\mathbb{R}^3 )$. Moreover, if the embedding is smooth, then $\Gamma\cong \{ \mathrm{id} \} $ or $ Q_8 $.
\end{thm}

Now we consider the non-collapsing limit of $4$-dimensional tori with bounded Ricci curvature. Our argument relies on the following fact, which is known to experts.

\begin{fact}
\label{factresolutionofbiggergroup}
Let $(M^4_i, g_i ,p_i) \xrightarrow{GH} (X ,d_X ,p_\infty)$ satisfy $\left| {\rm Ric}_{g_i} \right| \le 3 $ and $ {\rm Vol} (B_1 (p_i) ) \geq v $, where $v>0$ is a constant independent of $i$. Assume that $X$ is an orbifold with a singular point of the type $\mathbb{R}^4/\Gamma $ for some non-trivial discrete subgroup $ \Gamma \leq  O(4)$ acting freely on $\mathbb{S}^3$. Then for sufficiently large $i$, there exist an ALE Ricci flat $4$-manifold $(V,g_V)$ asymptotic to $ \mathbb{R}^4/\Gamma_1 $ and a $C^{2,\frac{1}{2}}$ embedding $\iota : V\to M_i $, where $\Gamma_1$ is a non-trivial discrete subgroup of $ O(4)$ acting freely on $\mathbb{S}^3$, and $|\Gamma_1|\leq |\Gamma|$, and $|G|$ is the order of $G$.
\end{fact}

\begin{proof}
By the standard result about non-collapsing limit of $4$-dimensional manifold with $2$-side Ricci bound due to Anderson \cite{anderson1}, Bando–Kasue–Nakajima \cite{bkn1} and Tian \cite{tg5}, $X$ is a topological orbifold with isolated singularities, and the metrics $g_i$ converging to a $C^{1,\frac{1}{2}}$ metric on the regular part of $X$. Let $x\in X$ denote the singular point of the type $\mathbb{R}^4/\Gamma $, and choose a sequence $x_i \in M_i $ converging to $x$. Then one can find a neighborhood $U$ of $x$ and a $C^{2,\frac{1}{2}}$-diffeomorphic $ F : U \to \mathbb{R}^4 /\Gamma $. Now we consider the standard embedding $\Phi : \mathbb{S}^3 / \Gamma \to \mathbb{R}^4 /\Gamma $. One can see that the image $\Sigma = F^{-1} ( \Phi ( \mathbb{S}^3 / \Gamma )) $ divides $X$ in to two parts, $X_1$ and $X_2$, and $\Sigma$ is there boundary. Assume that $x\in X_1$. Then there exists $\epsilon >0$ such that $B_{5\epsilon} (x) \in U \cap X_1$. Hence one can find a $C^{2,\frac{1}{2}}$-embedding $\Sigma \to \Sigma_i $ for sufficiently large $i$ by the convergence of the differential structure of $M_i $ outside the topological singularities of $X$. Clearly, for sufficiently large $i$, $\Sigma_i $ divides $M_i$ in to two parts, $M_{i,1}$ and $M_{i,2}$, and $B_{\epsilon} (x_i) \subset M_{i,1} $. Let 
$$r_i = \inf \left\lbrace r\in (0,\epsilon) : \Vol \left( B_r (y) \right) \geq \frac{2}{3} \Vol \left( B^{\mathbb{R}^4}_r (0) \right) , \forall y\in B_{2\epsilon } (x_i ) \right\rbrace .$$
Clearly, $|\Gamma |\geq 2$ shows that $\lim_{i\to\infty } r_i =0 $. Since $x$ is the unique singular point in $B_{5\epsilon } (x)$, we see that for sufficiently large $i$, there exists $y_i \in B_{\epsilon} (x_i) $ such that $\Vol \left( B_{r_i} (y_i) \right) = \frac{2}{3} \Vol \left( B^{\mathbb{R}^4}_{r_i} (0) \right) $. Let $(Y,d_y ,y)$ be the Gromov-Hausdorff limit of the pointed manifolds $(M_i , r^{-2}_i g_i ,y_i ) $. Then $Y$ is an orbifold, and we have $\Vol (B_{1} (y) ) = \frac{2}{3} \Vol \left( B^{\mathbb{R}^4}_{1} (0) \right) $. Hence $(M_i , r^{-2}_i g_i ,y_i ) $ converge to $(Y,d_y ,y)$ in the $C^{1,\frac{1}{2}}$ sense, and $d_Y$ is induced by a $C^{1,\frac{1}{2}}$ metric $g_Y$ on $Y$. Since $r_i \to 0 $, one can see that $\Ric (g_Y) =0 $ in the weak sense. Then $g_Y$ is smooth and $\Ric (g_Y) =0 $. It follows that there exists an asymptotically locally Euclidean Ricci flat manifold $(V,g_V )$ with the end $\mathbb{S}^3 /\Gamma_1 $ which embed in $\tilde{M}_i$ smoothly for sufficiently large $i$. Hence one can construct the embedding $\iota $. Note that $\Vol (B_{1} (y) ) = \frac{2}{3} \Vol \left( B^{\mathbb{R}^4}_{1} (0) \right) $ implies that $\Gamma_1$ is non-trivial.
\end{proof}

Now we are ready to prove Proposition \ref{propT42-sidericcibound} in the non-collapsed case.

\begin{lmm}\label{lmmT4noncollap2-sidericcibound}
Let $(M^4_i, g_i) \xrightarrow{GH} (X ,d_X)$ satisfy $\left| {\rm Ric}_{g_i} \right| \le 3 $ and $ {\rm diam} (M_i ,g_i ) \le D $, where $D>0$ is a constant independent of $i$, and the rectifiable dimension of $X$ is $4$.  Assume that $M^4_i$ are all homeomorphic to tori $T^4$ or $\mathbb{S}^4$. Then $X $ is homeomorphic to $M_i$ for sufficiently large $i$.
\end{lmm}

\begin{proof}
Assume that $x\in X$ is singular point of the type $\mathbb{R}^4/\Gamma $, where $ \Gamma $ is a non-trivial discrete subgroup of $O(4)$ acting freely on $\mathbb{S}^3$. Then by Fact \ref{factresolutionofbiggergroup}, one can see that for sufficiently large $i$, there exist an ALE Ricci flat $4$-manifold $(V,g_V)$ asymptotic to $ \mathbb{R}^4/\Gamma_1 $ and a $C^{2,\frac{1}{2}}$ embedding $\iota : V\to M_i $, where $\Gamma_1$ is a non-trivial discrete subgroup of $ O(4)$ acting freely on $\mathbb{S}^3$. Now we fix a sufficiently large $i$. Since $M_i$ are all homeomorphic to $T^4$ or $\mathbb{S}^4$, the universal covering spaces $\tilde{M}_i$ are all homeomorphic to $\mathbb{R}^4$ or $\mathbb{S}^4$. Hence, one can find a locally flat embedding of $ \mathbb{S}^3/\Gamma_1 $ into $\mathbb{S}^4$. By Theorem \ref{sphericalhyper}, one can see that $ \mathbb{S}^3/\Gamma_1 $ is isometric to $\mathbb{S}^3$, $\mathbb{S}^3 / Q_8 $ and $\mathbb{S}^3 / I^* $, where $I^*$ is the binary icosahedral group. Since $x$ is a singular point, $\Gamma_1 \cong Q_8 $ or $ I^* $.

Clearly, the volume growth of $V$ shows that $\pi_1 (V) $ is finite, the connectedness of $V$ gives $H_0 (V;\mathbb{Z} ) \cong \mathbb{Z} $, and the openness of $V$ implies that $H_4 (V;\mathbb{Z}) =0 $. Hence the Betti numbers $ b_1 (V) =b_4 (V) =0 $, and $b_0 (V) =1$. Fix an open subset $V' \subset \iota(V) $ with smooth boundary $\partial V' \cong \mathbb{S}^3 /\Gamma_1 $ such that $\iota(V) \setminus V' $ diffeomorphic to $\left( \mathbb{R}^4 \setminus B_1 (0) \right) /\Gamma_1  $. Now we apply the Mayer-Vietoris exact sequence to $\bar{V'} $ and $\tilde{M}_i \setminus V' $. Then we get the exact sequence
$$ H_2 (\mathbb{S}^3 /\Gamma_1 ;\mathbb{Q} ) \to H_2 (V' ;\mathbb{Q} ) \oplus H_2 (\tilde{M}_i \setminus V' ;\mathbb{Q} ) \to H_2 (\tilde{M}_i ; \mathbb{Q} ) .$$
Since $\tilde{M}_i $ is homeomorphic to $\mathbb{R}^4$ or $\mathbb{S}^4 $, we have $H_2 (\tilde{M}_i ; \mathbb{Q} ) =0 $. By the Poincar\'e duality, we have $H_2 (\mathbb{S}^3 /\Gamma_1 ;\mathbb{Q} ) \cong H_1 (\mathbb{S}^3 /\Gamma_1 ;\mathbb{Q} ) = 0 $. Hence $b_2 (V) =b_2 (V') =0 $, the signature $\tau (V) =0 $, and the Euler number $\chi (V) = b_0 (V) - b_1 (V) +b_2 (V) -b_3 (V) +b_4 (V) = 1-b_3 (V) \leq 1 $.

Now we apply the Hitchin's inequality on $V$. Then we have
$$ 2 > 2 \left( \chi (V) - \frac{1}{|\Gamma_1 |} \right) \geq 3 \left| \tau (V) + \eta \left( \mathbb{S}^3 / \Gamma_1 \right) \right| = 3 \left| \eta \left( \mathbb{S}^3 / \Gamma_1 \right) \right| ,$$
where $\eta \left( \mathbb{S}^3 / \Gamma_1 \right) $ is the eta invariant of the space $\mathbb{S}^3 / \Gamma_1 $ for the signature operator.

Since $\Gamma_1 \cong Q_8 $ or $I^*$, a direct calculation gives $\eta \left( \mathbb{S}^3 / \Gamma_1 \right) = \pm \frac{3}{4} $ or $\pm \frac{361}{180}$ (up to the orientation). See also \cite{nakajima1}. Hence $3 \left| \eta \left( \mathbb{S}^3 / \Gamma_1 \right) \right| \geq \frac{9}{4} >2 $, contradiction. Then we see that the singular set of $X$ is empty, and hence $M_i$ converge to $X$ in the $C^{1,\frac{1}{2}}$ sense. It follows that $M_i $ is homeomorphic to $X$ for sufficiently large $i$, which completes the proof.
\end{proof}

Before proving the collapsed case, we recall the theorem about uniform lattice coverings.

\begin{thm}[{\cite[Theorem 4.2]{ks1}}]\label{thmuniformlattice}
Let $(M^n,g)$ be homeomorphic to $T^n$ with ${\rm diam} (M,g) \le D$. Let $(\tilde M, \tilde g)$ be the universal cover where $\Gamma= \pi_1(M)$ acts by isometries. Then there exists $\Lambda\leq \Gamma$ with $\Lambda \cong \mathbb{Z}^n$ and 
	\begin{itemize}
		\item[(i)] ${\rm diam} (\tilde M/\Lambda , \tilde{g}) \le 6^n D$;

		\item[(ii)] There exists a family of generators $\{\gamma_1, \ldots , \gamma_n\}$ of $\Lambda$ such that
		\begin{equation*}
			A(n)^{-1} D \,\| \gamma \|_1 \le \dist_{\tilde M}(x,\gamma(x)) \le A(n) D\, \| \gamma \|_1 \, , \quad \forall \, \gamma \in \Lambda\, , x\in \tilde M \, ,
		\end{equation*}
where $\| \gamma_1^{\alpha_1} \circ \ldots \circ \gamma_n^{\alpha_n} \|_1 = \sum_{i=1}^n |\alpha_i |$.
	\end{itemize}
\end{thm}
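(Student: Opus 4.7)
The strategy is the classical ``short basis'' construction of Margulis--Gromov, refined so that the basis elements have displacements comparable to $D$. Since $M$ is homeomorphic to $T^n$, the deck group $\Gamma = \pi_1(M) \cong \mathbb{Z}^n$ acts cocompactly and properly discontinuously on $\tilde M$, and the diameter bound forces every $\Gamma$-orbit to be $D$-dense. Fix $\tilde x \in \tilde M$, set $\ell(\gamma) := \dist_{\tilde M}(\tilde x, \gamma \tilde x)$, and construct a Gromov short basis inductively: let $\gamma_i$ minimize $\ell$ over $\Gamma \setminus \langle \gamma_1, \ldots, \gamma_{i-1}\rangle$. The standard ``move the basepoint'' argument gives $\ell(\gamma_i) \leq C(n) D$, and since $\mathrm{rk}\,\Gamma = n$ the procedure terminates at $i = n$, producing a finite-index $\Lambda_0 = \langle \gamma_1, \ldots, \gamma_n \rangle \cong \mathbb{Z}^n$.

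To force $\ell(\gamma_i') \gtrsim D$ as needed for (ii), I would inflate each short generator by taking $\gamma_i' := \gamma_i^{N_i}$ with $N_i := \lceil D / \ell(\gamma_i) \rceil$, obtaining a sublattice $\Lambda = \langle \gamma_1', \ldots, \gamma_n' \rangle$ with $\ell(\gamma_i') \in [cD, CD]$. The subtle point is that a priori $\ell(\gamma_i^{N_i})$ could be much smaller than $N_i \ell(\gamma_i)$, since $\tilde M$ carries no affine structure. This is controlled by the stable translation length $\bar \ell(\gamma) := \lim_N N^{-1} \ell(\gamma^N)$, which is a norm on $\Gamma \otimes \mathbb{R} \cong \mathbb{R}^n$ (using that $\Gamma$ is abelian and acts properly discontinuously), combined with a comparison $\ell(\gamma_i) \sim \bar \ell(\gamma_i)$ on the short basis that follows from minimality. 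Part (i) then follows since a Dirichlet fundamental domain for $\Lambda$ at $\tilde x$ has diameter at most $\sum_i \ell(\gamma_i') \leq n \cdot CD$; the explicit exponential $6^n$ records the inductive tracking of constants through the inflation step.

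The upper bound in (ii) is immediate from the triangle inequality. The lower bound is the main obstacle and is a discrete analogue of Minkowski's second theorem on successive minima: I would show that for $\gamma = (\gamma_1')^{\alpha_1} \cdots (\gamma_n')^{\alpha_n}$ one has $\dist_{\tilde M}(x, \gamma x) \geq A(n)^{-1} D \max_i |\alpha_i|$ by projecting onto the direction where $|\alpha_i|$ is maximal and invoking the minimality built into the short basis construction: were this displacement bound to fail, one could exhibit an element of $\Gamma \setminus \langle \gamma_1, \ldots, \gamma_{i-1}\rangle$ shorter than $\gamma_i$, contradicting its choice. Summing over coordinates loses another factor of $n$ and yields the stated bilipschitz estimate. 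The technical core --- clean for flat tori via Minkowski reduction of lattices in $\mathbb{R}^n$ --- is the quantitative execution of this argument for an arbitrary isometric cocompact $\mathbb{Z}^n$-action on $\tilde M$ with no ambient Euclidean structure, where one must work purely with displacements and the stable norm.
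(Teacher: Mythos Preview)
The paper does not prove this theorem; it is quoted from Kloeckner--Sabourau \cite{ks1} (their Theorem~4.2) and used as a black box in the proofs of Lemmas~\ref{lmmT4noncollap2-sidericcibound}, \ref{lmmT4collapsing} and \ref{lmmkahlerT4collapsing}. There is therefore no proof in the present paper to compare your proposal against.

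For what it is worth, your outline follows the natural strategy --- Gromov short basis plus an inflation to bring all generator displacements up to scale $D$ --- and the points you flag as delicate (comparing $\ell$ to the stable norm $\bar\ell$ in the absence of a Euclidean structure, and the Minkowski-type lower bound for part~(ii)) are indeed where the substance lies. If you want to check the argument and the specific constant $6^n$, you should consult \cite{ks1} directly.
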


Then we consider the collapsing case. Here we use an argument similar to the argument in \cite{bruenabersemola1}.

\begin{lmm}\label{lmmT4collapsing}
Let $(M_i^4 , g_i) \xrightarrow{GH} (X^k,d_X)$ satisfy $|{\rm Ric}_{g_i}| \le 3 $ and $ {\rm diam} (M_i^4 , g_i) \le D $, where $D>0$ is a constant independent of $i$, and $k\leq 3$ is the rectifiable dimension of $X$. Assume that $M_i$ are all homeomorphic to tori $T^4$. Then $X $ is homeomorphic to $T^k$, and $M_i$ is diffeomorphic to the standard $T^4$ for sufficiently large $i$.
\end{lmm}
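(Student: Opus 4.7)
The plan is to adapt the equivariant Gromov-Hausdorff convergence strategy of Bru\`e-Naber-Semola \cite{bruenabersemola1} to the two-sided Ricci setting in dimension four, and to upgrade the topological conclusion to a diffeomorphism via the refined collapsing theory available when $|\Ric_{g_i}|\le n-1$. Since $\pi_1(M_i) = \mathbb{Z}^4$, Theorem \ref{thmuniformlattice} produces a generating basis $\gamma^{(i)}_1,\dots,\gamma^{(i)}_4$ of $\pi_1(M_i)$ whose displacements on the universal cover $(\tilde{M}_i,\tilde{g}_i)$ satisfy $A^{-1}D\|\gamma\|_1 \le \dist_{\tilde{M}_i}(x,\gamma x) \le AD\|\gamma\|_1$. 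Passing to a subsequence, I take the pointed equivariant Gromov-Hausdorff limit
\[
\bigl(\tilde{M}_i,\tilde{g}_i,\pi_1(M_i),\tilde{x}_i\bigr) \longrightarrow \bigl(\tilde{X},d_{\tilde{X}},G,\tilde{x}\bigr),
\]
with $X = \tilde{X}/G$. The generators converge to isometries spanning a $\mathbb{Z}^4$-subgroup of $G$ with uniform displacement bounds. Applying the Cheeger-Colding almost splitting theorem to the (almost) lines produced by these generators, $\tilde{X}$ splits isometrically as $\mathbb{R}^m \times Z$, and the $\mathbb{Z}^4$-subgroup of $G$ surjects onto a cocompact lattice of $\mathbb{R}^m$. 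Since the rectifiable dimension of $X$ is $k\le 3$, one forces $m = k$, with the kernel of this surjection acting transversely on a factor of dimension $4-k$.

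Next, I invoke the collapsing fibration theory in dimension four under $|\Ric|\le n-1$. By the Cheeger-Fukaya-Gromov fibration theorem (as extended to the Ricci setting via Kapovitch-Wilking and the Cheeger-Tian $\epsilon$-regularity), for $i$ large there exists a smooth fibration $f_i : M_i \to X$ with infranilmanifold fibers of dimension $4-k$. Because $\pi_1(M_i) = \mathbb{Z}^4$ is abelian, the injection of the fiber's fundamental group into $\pi_1(M_i)$ forces the fiber to have virtually abelian fundamental group, so the fiber is a flat torus $T^{4-k}$. The two-sided Ricci $\epsilon$-regularity combined with the blow-up analysis of Lemma \ref{lmmT4noncollap2-sidericcibound} applied to the $\mathbb{Z}^4$-equivariant rescaled limits rules out the orbifold singularities $\mathbb{R}^4/Q_8$ and $\mathbb{R}^4/I^\ast$: any such singularity would produce, after passing to the universal cover and rescaling, a non-collapsed asymptotically locally Euclidean end whose free $\mathbb{Z}^4$-translation action is incompatible with the Hitchin-inequality obstruction already derived. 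The homotopy exact sequence of $T^{4-k}\to M_i \to X$ then yields $\pi_1(X) = \mathbb{Z}^k$, and $X$ being a smooth, closed, aspherical manifold of dimension $\le 3$ with free abelian fundamental group is homeomorphic to $T^k$ (by Bieberbach for $k\le 2$, Perelman's geometrization for $k=3$, and trivially for $k\le 1$).

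Finally, the smooth torus bundle $T^{4-k}\to M_i\to T^k$ with total space homeomorphic to $T^4$ has cohomology ring $\Lambda^\ast \mathbb{Z}^4$, which forces all differentials in the Serre spectral sequence to vanish and hence all characteristic classes of the bundle to be zero; the bundle is therefore smoothly trivial, and $M_i\cong T^k\times T^{4-k} = T^4$ diffeomorphically for $i$ large. The main obstacle I anticipate is the second step: carrying out the Cheeger-Fukaya-Gromov fibration in the two-sided Ricci collapsing regime and ruling out the $Q_8$ and $I^\ast$ orbifold singularities on the equivariant rescaled limits. Although Lemma \ref{lmmT4noncollap2-sidericcibound} supplies the local singular obstruction, one must carefully combine it with the $\mathbb{Z}^4$-equivariance so that no such singularity can survive in a blow-up of the universal cover; once this structural input is secured, the remaining topology and the upgrade to a diffeomorphism are standard.
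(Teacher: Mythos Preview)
Your argument has a structural circularity. You invoke the Cheeger--Fukaya--Gromov fibration theorem to produce $f_i:M_i\to X$ with infranil fibers, and only afterwards try to argue that $X$ has no orbifold singularities. But the fibration theorem requires the base to already be a smooth manifold; until that is established, there is no fibration to work with, and your subsequent homotopy exact sequence computation of $\pi_1(X)$ has no footing. Your attempt to rule out $Q_8$ and $I^*$ singularities by ``blow-up analysis of Lemma~\ref{lmmT4noncollap2-sidericcibound} applied to the $\mathbb{Z}^4$-equivariant rescaled limits'' is too vague to close this gap: rescaled limits of the universal cover need not be non-collapsed tori, so Lemma~\ref{lmmT4noncollap2-sidericcibound} does not apply to them as stated.

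The paper avoids this circularity by a different use of Theorem~\ref{thmuniformlattice}. The displacement \emph{lower} bound $A^{-1}D\|\gamma\|_1$ forces the intermediate covers $\tilde M_i/\Lambda_i$ (which are again tori, with diameter $\le 6^4 D$) to be \emph{non-collapsed}. Lemma~\ref{lmmT4noncollap2-sidericcibound} then applies directly to this sequence and gives $\tilde X_\infty/\Lambda_\infty\cong T^4$ with a $C^{1,1/2}$ metric. From there the argument is purely about the action of the abelian Lie group $\Gamma_\infty/\Lambda_\infty$ on this $T^4$: one checks it is free, identifies its identity component with a torus $T^a$, and obtains $X$ as a $C^{1,1/2}$ Riemannian submersion quotient $T^4/T^a\cong T^{4-a}$. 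Only then is the fibration theorem invoked. Two further points: your splitting claim $\tilde X=\mathbb{R}^m\times Z$ is unjustified, since Cheeger--Gromoll splitting needs $\Ric\ge 0$, not merely bounded Ricci; and your spectral-sequence argument that vanishing characteristic classes imply smooth triviality of $T^{4-k}\to M_i\to T^k$ is incomplete---the paper handles this by a case-by-case mapping class group argument (e.g.\ for $k=1$ one must show the monodromy in $\mathrm{Diff}(T^3)$ is smoothly isotopic to the identity, which uses Waldhausen).
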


\begin{proof}
Let $\Gamma_i = \pi_{1} (M_i ) \leq {\rm Iso} (\tilde{M}_i ,g_i ) $. We consider the pointed equivariant Gromov-Hausdorff convergence $(\tilde{M}_i ,g_i ,x_i ,\Gamma_i ) \to (\tilde{X}_\infty ,d_X ,x_\infty ,\Gamma_\infty ) $, where $x_i \in M_i $. Let $\Lambda_i \leq \Gamma_i $ be the uniform lattices in Theorem \ref{thmuniformlattice}. Without loss of generality, we can assume that $(\tilde{M}_i ,g_i ,x_i ,\Lambda_i )$ converging to $(\tilde{X}_\infty ,d_X ,x_\infty ,\Lambda_\infty )$ in the pointed equivariant Gromov-Hausdorff sense. By Lemma \ref{lmmT4noncollap2-sidericcibound}, we see that $\tilde{X}_\infty / \Lambda_\infty $ is homeomorphic to $T^4$, and the metric is given by a $C^{1,\frac{1}{2}}$ Riemannian metric.

By Colding-Naber's result \cite[Theorem 1.21]{coldingnaber1}, ${\rm Iso} (\tilde{X}_\infty )$ and ${\rm Iso} (\tilde{X}_\infty /\Lambda_\infty )$ are Lie groups. Since $\Gamma_\infty$, $\Lambda_\infty $ are closed subgroups of ${\rm Iso} (\tilde{X}_\infty )$, and $\Gamma_\infty/\Lambda_\infty $ is a closed subgroup of ${\rm Iso} (\tilde{X}_\infty /\Lambda_\infty )$, one can see that they are also Lie groups. Let $G\leq \Gamma_\infty $ be a compact subgroup. Since $\Gamma_\infty $ is abelian, we see that the elements in $G$ and $\Lambda_\infty $ are commutative. It follows that the diameters of orbits of $G$ is uniformly bounded. Since $\tilde{X}_\infty / \Gamma_\infty $ is homeomorphic to $T^4$, we see that there exists an action of $G$ on $\mathbb{R}^4$ such that the diameters of orbits of $G$ is uniformly bounded under the usual Euclidean metric. Hence $G=\{{\rm id}\}$ \cite[Chapter III, Corollary 9.7]{bredon1}, and the action of $\Gamma_\infty $ on $\tilde{X}_\infty $ is free. 

By the compactness of $ \tilde{X}_\infty /\Lambda_\infty $, we see that $\Gamma_\infty/\Lambda_\infty \cong \Gamma_0 \oplus T^a $, where $\Gamma_0 $ is a finite abelian group, and $a\in\mathbb{N}$. Let $\Gamma'_0$ be the preimage of $\Gamma_0 $ under the homomorphism $\Gamma_\infty \to \Gamma_\infty /\Lambda_\infty $. Clearly, $\Gamma'_0$ is discrete. Then $\tilde{X}_\infty / \Gamma'_0 $ is homeomorphic to $T^4$. Note that an aspherical closed manifold with an abelian fundamental group is homeomorphic to a torus. See Freedman and Quinn's book \cite{frequi1} for the $4$-dimensional case. Write $\tilde{X}' = \tilde{X}_\infty / \Gamma'_0 $.

Now we consider the action of $T^a$ on $\tilde{X}' $. Choose $\gamma_1 \in T^a $ such that $\gamma_1 (x') = x' $ for some $x'\in \tilde{X}'$. Let $\tilde{\gamma}_1 \in \Gamma_\infty $ such that the image of $\tilde{\gamma}_1 $ under the quotient homomorphism $\Gamma_\infty \to \Gamma_\infty /\Gamma'_0 $ is $\gamma_1 $, and let $\tilde{x} \in \tilde{X}_\infty $ such that the image of $\tilde{x} $ under the quotient map $\tilde{X}_\infty \to \tilde{X}_\infty /\Gamma'_0 $ is $x' $. Then we can find $\gamma_2 \in \Gamma'_0 $ such that $\gamma^{-1}_2 \circ \tilde{\gamma}_1 (\tilde{x})=\tilde{x} $. Since the action of $\Gamma_\infty $ on $\tilde{X}_\infty $ is free, we see that $\gamma_2 = \tilde{\gamma}_1 $, and hence $\gamma_1 ={\rm id}$. It follows that the action of $T^a$ on $\tilde{X}' $ is free. By the standard theory, the action $ T^a \times \tilde{X}' \to \tilde{X}' $ is $C^{2,\frac{1}{2}}$. See \cite[Theorem 4]{bochmon1} and \cite{calabihartman1} for more details. Then there exists a unique $C^{1,\frac{1}{2}}$ Riemannian structure on the quotient $\tilde{X}' /T^a $ such that the projection $ \tilde{X}' \to \tilde{X}' /T^a $ is a $C^{1,\frac{1}{2}}$ Riemannian submersion. It follows that $ \tilde{X}' $ is $C^{2,\frac{1}{2}}$-diffeomorphic to a smooth $T^a$-bundle over $ X= \tilde{X}' /T^a $. Since the projection $ \tilde{X}' \to \tilde{X}' /T^a $ is a topological trivial bundle \cite[Chapter IV, Theorem 9.5]{bredon1}, we see that the homotopy groups of $X $ is isomorphic to the homotopy groups of $T^{4-a}$, and hence $X$ is homeomorphic to $T^k =T^{4-a}$.

By Fukaya’s fibration theorem \cite{kf1, jckfmg1}, we see that for sufficiently large $i$, $M_i$ is diffeomorphic to a fiber bundle over $T^k$ with infra-nilmanifold fiber $F_i$. See also \cite[Theorem 0.6]{jzh1}. By the long exact sequence in the homotopy groups:
$$\cdots \to \pi_{k} (F_i) \to \pi_{k} (M_i) \to \pi_{k} (T^k) \to \pi_{k-1} (F_i) \to \cdots \to \pi_{1} (T^k) \to \pi_{0} (F_i) = \{ e \} ,$$
we have $\pi_1 (F_i) \cong \mathbb{Z}^{4-k} $ and $\pi_i (F_i) =0$, $\forall i\geq 2 $. Note that for any torus $T^m$, we have $\pi_1 (F_i) \cong \mathbb{Z}^{m} $ and $\pi_i (F_i) =0$, $\forall i\geq 2 $. Hence the bundle $F_i \to M_i \to T^k $ is topological trivial. If $4-k\leq 3$, one can see that $F_i$ is diffeomorphic to standard $T^{4-k}$ \cite{mt1, p1, p2, p3}.

When $k=0$, we see that ${\rm diam} (M_i ,g_i ) \to 0$ as $i\to \infty$. Write $\lambda_i = {\rm diam} (M_i ,g_i ) $. Then we can assume that $  (M_i ,\lambda^{-2}_i g_i ) $ converge to $ (Y^{k'} ,d_{Y} )$ in the Gromov-Hausdorff sense, where $k'\geq 1$ is the rectifiable dimension of $Y$. If $k'=4$, then $ (Y ,d_{Y} )$ is a flat torus, and hence $ M_i $ is diffeomorphic to the standard $T^4$ for sufficiently large $i$. Now we can replace $(M_i ,g_i )$ by $(M_i , \lambda^{-2}_i g_i )$, and reduce the question to the case $k\in \{ 1,2,3 \}$.

When $k=1$, $ M_i $ is diffeomorphic to a smooth $T^3$-bundle over $ S^1 $, and this bundle is topological trivial. Let $\{U_1 ,U_2\}$ be the standard open covering of $S^1$ by intervals. Then the connected components of $U_1 \cap U_2 $ is $V_1$, $V_2$. Now we consider the trivializations $\phi_1 : U_1 \times T^3 \to M_i $ and $\phi_2 : U_2 \times T^3 \to M_i $. Then $\phi^{-1}_1 \circ \phi_2 $ gives a smooth map $\varphi_{12,s} : U_1 \cap U_2 \to {\rm Diff} (T^3) $. Since $M_i$ is a topological trivial bundle, we can find a continuous map $\varphi_{2,h} : U_2 \to {\rm Homeo} (T^3) $ such that $ \varphi_{2,h} |_{U_1 \cap U_2 } = \varphi_{12,s} $. Hence the automorphism given by $\varphi_{12,s} (t) $ is independent of $t\in U_1\cap U_2$. Since the natural homomorphism from the mapping class group of $T^3$ to the outer automorphism of its fundamental group is an isomorphism \cite{wal1}, we see that there exists a smooth map $\varphi_{2,s} : U_2 \to {\rm Diff} (T^3) $ such that $ \varphi_{2,s} |_{U_1 \cap U_2 } = \varphi_{12,s} $. Hence $ M_i $ is diffeomorphic to the smooth trivial $T^3$-bundle over $ S^1 $.

When $k=2,3$, $ M_i $ is diffeomorphic to a smooth $T^{4-k}$-bundle over $ T^k $, and this bundle is topological trivial. Then we consider the projection $M_i \to S^1 $ given by the composition of $M_i \to T^k $ and $T^k \to S^1 $. By the argument in the case $k=1$, we see that $ M_i $ is diffeomorphic to the standard $T^4$. It follows that $ M_i $ is diffeomorphic to the standard $T^4$ for $k\leq 3$ and sufficiently large $i$, which completes the proof.
\end{proof}

\subsection{K\"ahler case}
Now we consider the non-collapsing case.

\begin{lmm}\label{lmmT4noncollapsingkahler}
Let $(M_i^4 , g_i) \xrightarrow{GH} (X^k,d_X)$ satisfy ${\rm Ric}_{g_i} \ge -3$ and $ {\rm diam} (M_i^4 , g_i) \le D $, where $D>0$ is a constant independent of $i$. Assume that $M^4_i$ are all homeomorphic to tori $T^4$, the rectifiable dimension of $X$ is $4$, and $g_i$ are K\"ahler.

Then $X $ is a topological orbifold with isolated singularities. Moreover, every point of $X$ has a neighborhood that is homeomorphic to an open subset of $ \mathbb{R}^4 /Q_8 $.
\end{lmm}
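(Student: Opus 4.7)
My plan is to run the proof strategy of Lemma \ref{lmmT4noncollap2-sidericcibound}, substituting the complex-analytic input supplied by the refinement of Liu--Sz\'ekelyhidi's theorem in Appendix \ref{ghpolarized2dim} in place of the two-sided Ricci curvature bound. First, I would apply this refinement: since $(M_i, g_i)$ is a non-collapsed K\"ahler sequence with Ricci bounded below and bounded diameter, the limit $X$ inherits a normal complex-analytic structure, the $g_i$ converge smoothly to $g_\infty$ on the regular locus $X_{\rm reg}$, the singular set $\Sigma \subset X$ is a finite set of points, and at each $x \in \Sigma$ the unique metric tangent cone is a K\"ahler cone of the form $\mathbb{C}^2/\Gamma$ for some finite $\Gamma \leq U(2)$ acting freely on $S^3$; moreover a small metric neighborhood of $x$ in $X$ is homeomorphic to $\mathbb{R}^4/\Gamma$.

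The next step reproduces the topological reduction from the proof of Lemma \ref{lmmT4noncollap2-sidericcibound}. Theorem \ref{thmuniformlattice} provides a uniform $\mathbb{Z}^4$-sublattice $\Lambda_i \leq \pi_1(M_i)$; after replacing $M_i$ by the corresponding finite torus cover (still homeomorphic to $T^4$), the universal cover is $\tilde M_i \cong \mathbb{R}^4$, and lifting a punctured neighborhood of $x$ through this cover produces a locally flat topological embedding $S^3/\Gamma \hookrightarrow \mathbb{R}^4$. By the Crisp--Hillman classification \cite[Theorem 2.2]{crisphillman1} this forces $\Gamma$ to be trivial, the quaternion group $Q_8$, or the binary icosahedral group $I^*$.

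The remaining and principal difficulty is to rule out $\Gamma = I^*$. The Hitchin ALE inequality used in Lemma \ref{lmmT4noncollap2-sidericcibound} is not directly available here, since a one-sided Ricci bound does not by itself provide a smooth Ricci-flat ALE resolution at the singularity. My plan is to use the K\"ahler hypothesis through the Appendix's refinement: a suitable rescaling of $(M_i, g_i, x_i)$ at an $I^*$ singularity should yield, in the complex-analytic category, a K\"ahler Ricci-flat ALE space asymptotic to $\mathbb{C}^2/I^*$; by a Kronheimer-type uniqueness, such a space must be the $E_8$ gravitational instanton, which is simply connected with $b_2 = 8$ and embeds smoothly into $\tilde M_i \cong \mathbb{R}^4$ for sufficiently large $i$. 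The Mayer--Vietoris computation in the proof of Lemma \ref{lmmT4noncollap2-sidericcibound}, based on $H_2(\tilde M_i; \mathbb{Q}) = 0$, then forces the $b_2$ of this embedded resolution to vanish, giving the desired contradiction.

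The hardest technical point, and the main obstacle to this outline, is extracting the required K\"ahler Ricci-flat ALE smoothing from the Appendix's framework under only a one-sided Ricci bound (rather than the Bando--Kasue--Nakajima regularity which is used for the two-sided case); this is precisely where the refinement beyond the standard Liu--Sz\'ekelyhidi theorem is essential. Once the $I^*$ case is excluded, the remaining possibilities $\Gamma \in \{\{1\}, Q_8\}$ immediately give the claimed local homeomorphism to an open subset of $\mathbb{R}^4/Q_8$ at every point of $X$.
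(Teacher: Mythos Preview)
Your outline is correct up to and including the Crisp--Hillman reduction to $\Gamma \in \{\{1\}, Q_8, I^*\}$, but your plan for excluding $I^*$ is both unnecessary and, as you yourself flag, not actually justified: extracting a K\"ahler Ricci-flat ALE bubble from a one-sided Ricci bound is not something the Appendix provides, and no Kronheimer-type classification step is available without it.

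The paper's argument is much shorter, and the point you are missing is that the K\"ahler hypothesis upgrades two things simultaneously. First, the Liu--Sz\'ekelyhidi convergence on the regular part is \emph{smooth} (convergence of complex structures), so the embedding $\mathbb{S}^3/\Gamma \hookrightarrow M_i$ you produce is a smooth embedding, not merely a locally flat one. Second, since each $M_i$ is a K\"ahler torus, the Albanese map forces $M_i$ to be biholomorphic to a complex torus, so the universal cover $\tilde M_i$ is \emph{diffeomorphic} (not just homeomorphic) to standard $\mathbb{R}^4$. Now Crisp--Hillman \cite[Theorem 2.2]{crisphillman1} says more than you used: among $\mathbb{S}^3$, $\mathbb{S}^3/Q_8$, $\mathbb{S}^3/I^*$, the last admits a locally flat embedding in $\mathbb{R}^4$ but \emph{not a smooth one}. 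Since your lifted embedding $\mathbb{S}^3/\Gamma \hookrightarrow \tilde M_i \cong_{\mathrm{diff}} \mathbb{R}^4$ is smooth, $\Gamma \cong I^*$ is impossible. That is the entire endgame; no ALE bubble, no Hitchin inequality, no Mayer--Vietoris computation is needed.
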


\begin{proof}
By Liu-Sz\'ekelyhidi's results in \cite{lggs1, lggs2}, we see that $X$ is a topological orbifold with isolated singularities when $g_i  $ are K\"ahler. See also Appendix \ref{ghpolarized2dim}. Note that the Albanese map implies that $M_i$ are K\"ahler tori, and K\"ahler metrics on $\mathbb{C}^n$ must be polarized. Moreover, the differential structure of $M_i $ converges to the differential structure of $X$ in the $C^\infty $ sense outside the topological singularities.

Let $x\in X$ be a topological singular point. Then we can find a neighborhood $U$ of $x$ homeomorphic to $\mathbb{R}^4 /\Gamma $ for some $\Gamma \leq O(4) $. Now we consider the embedding $\mathbb{S}^3 / \Gamma \to \mathbb{R}^4 /\Gamma $. Hence one can find a smooth embedding $\mathbb{S}^3 / \Gamma \to M_i $ for sufficiently large $i$ by the convergence of the differential structure of $M_i $ outside the topological singularities of $X$. Since $\Gamma $ is finite, we see that map $\mathbb{S}^3 \to \tilde{M}_i $ given by lifting property of universal covering gives a smooth embedding $\mathbb{S}^3 / \Gamma \to \tilde{M}_i $.

By \cite[Theorem 2.2]{crisphillman1}, one can see that the only spherical manifolds which locally flat embed in $\mathbb{R}^4 $ are $\mathbb{S}^3$, $\mathbb{S}^3 / Q_8 $ and $\mathbb{S}^3 / I^* $, and the embedding of $\mathbb{S}^3 / I^* $ in $\mathbb{R}^4$ cannot be smooth, where $I^*$ is the binary icosahedral group. Since $M_i$ are K\"ahler tori, $ \tilde{M}_i $ are diffeomorphic to $\mathbb{R}^4$, and $\Gamma \ncong I^* $. If $\Gamma \cong \{ e\} $, the $x$ is not a singular point in $X$.

It follows that $\Gamma \cong Q_8 $, and $\mathbb{R}^4 /\Gamma $ is homeomorphic to $\mathbb{R}^4 /Q_8 $.
\end{proof}

Then we consider the collapsing case. Here we use an argument similar to the argument in \cite{bruenabersemola1}.

\begin{lmm}\label{lmmkahlerT4collapsing}
Let $(M_i^4 , g_i) \xrightarrow{GH} (X^k,d_X)$ satisfy ${\rm Ric}_{g_i} \ge -3 $ and $ {\rm diam} (M_i^4 , g_i) \le D $, where $D>0$ is a constant independent of $i$, and $k\leq 3$ is the rectifiable dimension of $X$. Assume that $g_i$ are K\"ahler metrics and $M^4_i$ are all homeomorphic to tori $T^4$. Then $X $ is homeomorphic to $T^k$.
\end{lmm}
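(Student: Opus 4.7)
Plan: The proof adapts the argument of Lemma \ref{lmmT4collapsing}, substituting Lemma \ref{lmmT4noncollapsingkahler} (K\"ahler non-collapsing) for Lemma \ref{lmmT4noncollap2-sidericcibound}, and adding a centralizer analysis to rule out or absorb the surviving $Q_8$ orbifold singularities in the noncollapsed cover.

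First, I set up the equivariant limit exactly as in the proof of Lemma \ref{lmmT4collapsing}: apply Theorem \ref{thmuniformlattice} to obtain sublattices $\Lambda_i \cong \mathbb{Z}^4$ of $\Gamma_i = \pi_1(M_i)$ whose quotient $\tilde M_i/\Lambda_i$ has diameter $\leq 6^4 D$ and whose nontrivial elements have translation length bounded below. This uniform translation-length lower bound forces uniform non-collapsing of the covers $\tilde M_i/\Lambda_i$, which remain K\"ahler $4$-tori with ${\rm Ric}_{g_i}\geq -(n-1)$ and bounded diameter. Pass to a pointed equivariant Gromov-Hausdorff limit $(\tilde M_i, x_i, \Gamma_i, \Lambda_i)\to (\tilde X_\infty, x_\infty, \Gamma_\infty, \Lambda_\infty)$; by Colding-Naber, $\Gamma_\infty$ is an abelian Lie group containing the discrete subgroup $\Lambda_\infty \cong \mathbb{Z}^4$, and $K := \Gamma_\infty/\Lambda_\infty = \Gamma_0 \oplus T^a$ is a compact abelian Lie group of dimension $a = 4-k$. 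By Lemma \ref{lmmT4noncollapsingkahler}, the K\"ahler noncollapsing limit $Y := \tilde X_\infty/\Lambda_\infty$ is a compact topological orbifold with finitely many isolated singularities, each modeled on $\mathbb{R}^4/Q_8$.

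The core new step is a centralizer analysis at these singularities. Since $T^a$ is connected and the singular set of $Y$ is finite and $K$-invariant, $T^a$ fixes each singular point $p$ pointwise, and the induced holomorphic isometric $T^a$-action on the tangent orbifold $T_pY \cong \mathbb{C}^2/Q_8$ lifts, through a finite cover of $T^a$, to a linear action on $\mathbb{C}^2$ commuting with $Q_8 \subset SU(2)$. A direct computation of the centralizer of $Q_8$ inside $U(2)$ shows it is just the scalar $U(1) = \{e^{i\theta}I\}$; hence $a \leq 1$. Consequently, when $k \leq 2$ (so $a \geq 2$) no $Q_8$ singularity can occur, and $Y$ is already a smooth K\"ahler $4$-manifold homeomorphic to $T^4$. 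When $k = 3$ (so $a = 1$), the stabilizer of each singular point in $K$ surjects onto the central $U(1)$, and the local model for $X = Y/K$ at the image of a singular point is $\mathbb{C}^2/(Q_8 \cdot U(1))$, i.e.\ the open cone over $\mathbb{CP}^1/V_4$. Since $V_4 = Q_8/\{\pm 1\}$ acts on $\mathbb{CP}^1 \cong S^2$ by three orientation-preserving involutions, the quotient is topologically $S^2$, so this cone is homeomorphic to $\mathbb{R}^3$; thus $X$ is a topological $k$-manifold in every case.

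With $X$ now known to be a compact topological $k$-manifold, the endgame mirrors that of Lemma \ref{lmmT4collapsing}: for any compact subgroup $G \leq \Gamma_\infty$, abelianness of $\Gamma_\infty$ and commutativity with $\Lambda_\infty$ give bounded $G$-orbits on $\tilde X_\infty$; lifting the induced $G$-action on $X$ to its universal topological cover (which is $\mathbb{R}^k$ once $X$ is identified as a closed aspherical $k$-manifold with $\pi_1 \cong \mathbb{Z}^k$) forces $G = \{e\}$ via \cite[Chapter III, Corollary 9.7]{bredon1}. Hence $\Gamma_\infty$ acts freely, Fukaya's fibration theorem furnishes a smooth $T^a$-bundle structure on the regular part of $Y$ over $X$, and the long exact sequence in homotopy gives $\pi_1(X) \cong \mathbb{Z}^k$ together with vanishing of higher homotopy; for $k \leq 3$ this identifies $X$ with $T^k$ by the classical classification of closed aspherical $k$-manifolds with abelian fundamental group (Perelman for $k = 3$). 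I expect the main obstacle to be the $k = 3$ case of the centralizer step: one must carefully justify the lifting of the $T^a$-action at a $Q_8$ fixed point through a finite cover of $T^a$, verify that the candidate local model for $X$ is genuinely a topological $\mathbb{R}^3$, and then patch these local manifold descriptions into a global closed $3$-manifold with the correct fundamental group before invoking the classification.
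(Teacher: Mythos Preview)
Your route is considerably more elaborate than the paper's. After the identical setup through Lemma~\ref{lmmT4noncollapsingkahler}, the paper disposes of the singularities of $Y=\tilde X_\infty/\Lambda_\infty$ in one stroke: since the rectifiable dimension drops ($k<4$), the group $\Gamma_\infty/\Lambda_\infty$ has positive dimension, so (the paper asserts) every point of $Y$ is accumulated by its own orbit; as the isometric action preserves the singular set, an isolated singular point would then be accumulated by other singular points, a contradiction. Hence $Y$ is already a topological manifold, $\tilde X_\infty$ is a complex manifold, and the paper simply says ``follow the argument in Lemma~\ref{lmmT4collapsing}''. There is no centralizer computation, no case split on $k$, and no local analysis of $\mathbb C^2/(Q_8\cdot U(1))$. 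Your centralizer observation that the connected part of the isometry group at a $Q_8$ point is the scalar $U(1)$ is correct and gives an alternative way to reach the same conclusion when $a\ge 2$; for $a=1$ your quotient model $C(S^2/V_4)\cong\mathbb R^3$ is also correct, but this extra branch is precisely what the paper's one-line argument is designed to avoid.

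Your endgame, however, contains a genuine confusion of spaces. The Bredon step in Lemma~\ref{lmmT4collapsing} is applied to the action of a compact $G\le\Gamma_\infty$ on $\tilde X_\infty$ itself (identified with $\mathbb R^4$ once $Y\cong T^4$), not to an action on the universal cover of $X$: since $G\le\Gamma_\infty$, the induced $G$-action on $X=\tilde X_\infty/\Gamma_\infty$ is trivial, so ``lifting the induced $G$-action on $X$'' yields nothing. You also invoke $\pi_1(X)\cong\mathbb Z^k$ to identify that cover with $\mathbb R^k$, and then deduce $\pi_1(X)\cong\mathbb Z^k$ afterwards from a bundle long exact sequence, which is circular. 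In your $k=3$ branch you allow $Y$ to keep its $Q_8$ singularities; but then $\tilde X_\infty$ is not homeomorphic to $\mathbb R^4$, the freeness of the $\Gamma_\infty$-action is no longer available by the Lemma~\ref{lmmT4collapsing} argument, and $Y\to X$ is not a fiber bundle (the fiber drops dimension over the singular images), so the homotopy long exact sequence you invoke does not apply. The paper sidesteps all of this by first forcing $Y$ to be a manifold.
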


\begin{proof}
Let $\Gamma_i = \pi_{1} (M_i ) \leq {\rm Iso} (\tilde{M}_i ,g_i ) $. We consider the pointed equivariant Gromov-Hausdorff convergence $(\tilde{M}_i ,g_i ,x_i ,\Gamma_i ) \to (\tilde{X}_\infty ,d_X ,x_\infty ,\Gamma_\infty ) $, where $x_i \in M_i $. Let $\Lambda_i \leq \Gamma_i $ be the uniform lattices in Theorem \ref{thmuniformlattice}. Without loss of generality, we can assume that $(\tilde{M}_i ,g_i ,x_i ,\Lambda_i )$ converging to $(\tilde{X}_\infty ,d_X ,x_\infty ,\Lambda_\infty )$ in the pointed equivariant Gromov-Hausdorff sense. 

By Lemma \ref{lmmT4noncollapsingkahler}, $\tilde{X}_\infty /\Lambda_\infty $ is a topological orbifold with isolated singularities. Since the rectifiable dimension of $X=\tilde{X}_\infty /\Gamma_\infty$ is less than the rectifiable dimension of $\tilde{X}_\infty /\Lambda_\infty $, we see that for any $\delta >0$ and $x'\in X'$, we can find $\gamma_{x'} \in \Gamma_\infty $ such that $d_{\tilde{X}_\infty /\Lambda_\infty } (x' ,\gamma_{x'}) \in (0,\delta ) $. It follows that $\tilde{X}_\infty /\Lambda_\infty$ doesn't have any isolated singular point, and hence $\tilde{X}_\infty /\Lambda_\infty$ is a topological manifold. Then $\tilde{X}_\infty $ is a complex manifold and $\Gamma_\infty $ acts analytically on $\tilde{X}_\infty $. Now we can follow the argument in Lemma \ref{lmmT4collapsing} to show that $X $ is homeomorphic to $T^k$.
\end{proof}

\appendix
 \section{Gromov-Hausdorff limits of polarized surfaces are orbifolds} 
 \label{ghpolarized2dim}
 
 In this appendix, we show that the Gromov-Hausdorff limits of polarized surfaces are holomorphic orbifolds. This is a refinement in the case of K\"ahler surfaces of the result obtained by Liu-Sz\'ekelyhidi, who generalized Tian's partial $C^0$-estimates \cite{tg5, tg6, tg3} to K\"ahler tangent cones \cite{lggs2} and polarized K\"ahler manifolds \cite{lggs1} with Ricci lower bound.
 
 At first, we show that every tangent cone of $X$ at $x$ is a topological orbifold.

\begin{prop}
\label{propconeorbifold}
Let $\left( X ,d ,p \right) $ be the pointed Gromov-Hausdorff limit of a sequence of pointed complete K\"ahler surfaces $\left( M_i ,\omega_i ,p_i \right) $ with $\Ric \left( \omega_i \right) \geq -\Lambda \omega_i $ and $\Vol \left( B_1 \left( p_i \right) \right) \geq v $, $\forall i\in\mathbb{N} $, where $\Lambda ,v>0$ are constants. Assume that $(X,d,p) \cong ( C(Y) ,o) $ is a metric cone, where $(Y,d_Y)$ is a compact metric space. Then we can find a discrete subgroup $\Gamma \in U(2) $ which acts freely away from the origin of $\mathbb{C}^2 $, such that $X$ is homeomorphic to $\mathbb{C}^2 /\Gamma $.
\end{prop}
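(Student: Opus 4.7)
The plan is to combine the complex-analytic structure of $X$ provided by Liu-Sz\'ekelyhidi with the extra rigidity coming from the metric cone hypothesis, in order to identify $X$ with a quotient $\mathbb{C}^{2}/\Gamma$. As a first step, the refinement of Liu-Sz\'ekelyhidi's theory to be established elsewhere in this appendix should endow $X$ with the structure of a normal complex analytic surface whose analytic singular locus is a discrete set that coincides with the metric singular locus, and away from which $X$ is a smooth K\"ahler manifold.

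Next I would use the cone structure to locate the singularities. The metric homotheties $\lambda \cdot : C(Y) \to C(Y)$, $\lambda \in \mathbb{R}_{+}$, fix only $o$ and must preserve the discrete singular set; hence this singular set is either empty or equals $\{o\}$. Moreover $(X,o)$ is isometric to each of its rescalings at $o$, so it is its own tangent cone, and the standard theory of non-collapsed Ricci-limit spaces then forces the smooth part of $X$ to be Ricci-flat.

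Thus $X \setminus \{o\}$ is a smooth Ricci-flat K\"ahler cone of complex dimension two, whose link $Y$ is therefore a smooth compact Sasaki-Einstein $3$-manifold. In real dimension three this forces $Y$ to have constant sectional curvature $+1$, so $Y \cong \mathbb{S}^{3}/\Gamma$ for some finite $\Gamma \leq SO(4)$ acting freely on $\mathbb{S}^{3}$. The Reeb field of $Y$ combined with the radial scaling gives a holomorphic $\mathbb{C}^{\ast}$-action on $X \setminus \{o\}$, which extends across $o$ using the normal analytic structure with $o$ as its unique fixed point; the K\"ahlerness of the cone metric forces the linearized action of $\Gamma$ at $o$ to commute with the complex structure, hence to land in $U(2)$. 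Since the metric cone over the round $\mathbb{S}^{3}/\Gamma$ is nothing but the flat $\mathbb{C}^{2}/\Gamma$, we obtain the desired isometry, and in particular the homeomorphism, $X \cong \mathbb{C}^{2}/\Gamma$.

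The main obstacle I foresee is the first step: establishing that the analytic singular set of $X$ is genuinely a discrete set that agrees with the metric singular set, rather than only of Hausdorff codimension four. This is precisely the refinement of Liu-Sz\'ekelyhidi's results that the rest of the appendix is devoted to; once it is in hand, the remainder of the proof becomes a fairly routine combination of cone-scaling invariance, the classification of compact Sasaki-Einstein $3$-manifolds as spherical space forms, and the linearization of the holomorphic $\mathbb{C}^{\ast}$-action at its unique fixed point.
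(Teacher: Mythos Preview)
Your route differs from the paper's and carries a gap you have not identified. First, on your stated obstacle: the normal-surface structure on $X$ with isolated analytic singularities does not come from ``the rest of the appendix'' --- this proposition is the \emph{first} result there and feeds into the later ones, so invoking them would be circular --- but directly from Liu--Sz\'ekelyhidi \cite{lggs2}, which already handles the metric-cone case. So the step you flag as the main difficulty is in fact available as input.

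The unacknowledged gap is the assertion that ``$X\setminus\{o\}$ is a smooth Ricci-flat K\"ahler cone, whose link $Y$ is therefore a smooth Sasaki--Einstein $3$-manifold.'' A priori the limit metric on the regular set is only $C^{1,\alpha}$, and the identification $X\cong C(Y)$ is a metric-space isometry, not automatically a smooth warped product $dr^2+r^2 g_Y$. To run the Sasaki--Einstein classification you would need the metric cone structure to upgrade to a $C^{1,\alpha}$ Riemannian warped product, the limit metric to be weakly Ricci-flat K\"ahler, and then elliptic regularity to bootstrap to $C^\infty$; none of this is supplied. The paper sidesteps all of it: it uses only that $Y$ is a \emph{topological} $3$-manifold, that $(Y,d_Y,\mathcal{H}^3)$ is $RCD^*(2,3)$ by Ketterer \cite{ck1} so $\pi_1(Y)$ is finite, and then geometrization \cite{mt1,p1,p2,p3} to conclude $Y\cong \mathbb{S}^3/\Gamma$ with $\Gamma\leq SO(4)$. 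The upgrade to $\Gamma\leq U(2)$ is obtained not via a Sasaki structure but from the complex-analytic link at $o$ together with the characterization of quotient surface singularities in \cite{an1}, followed by averaging a Hermitian metric to conjugate $\Gamma$ into $U(2)$. If the regularity could be filled in, your argument would actually yield the stronger conclusion that $X$ is \emph{isometric} to flat $\mathbb{C}^2/\Gamma$; the paper's synthetic route only delivers the homeomorphism in the statement but requires no smoothness of the limit metric.
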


\begin{proof}
By \cite{lggs2}, $X$ is homeomorphic to an affine normal surface, and one can see that $X\setminus \{ p \} $ is homeomorphic to a smooth $4$-dimensional manifold. Note that $X$ is normal and for any $r, \epsilon >0 $, $\mathcal{H}^2 (B_r (p) \setminus \mathcal{R}_{\epsilon } ) < \infty $. Then the complex structure on $X$ can be determined by the complex structure on $\mathcal{R}_\epsilon $ \cite[Theorem 4.1]{harpol1}, and the scaling maps on $X$ are holomorphic maps. Now we consider the following action of a one-parameter group on $X\setminus \{ p \} $ given by the rescalings:
\begin{equation*}
F:\left\{
\begin{aligned}
\mathbb{R} \times (X\setminus \{ p \} ) \;\;\;\;& \longrightarrow \;\;\;\; X\setminus \{ p \} ,\\
(t ,r ,y)  \;\;\;\;\;\;\;\;\; &\longmapsto \;\;\;\; \left( e^t r,y \right) , \\
\end{aligned}
\right.  
\end{equation*}
where $r>0 $, $y\in Y$, and $(r,y) \in C(Y) \setminus \{o\} \cong X\setminus \{p\} $. Since the group action 
$F$ preserves the complex structure of $X\setminus \{ p \} $ and has no fixed points, it follows that $F$ is actually a smooth action on the complex manifold $X\setminus \{ p \} $. Consequently, $F$ induces a smooth manifold structure on the quotient space $(X\setminus \{ p \})/\mathbb{R} $. Since $Y$ is homeomorphic to $(X\setminus \{ p \})/\mathbb{R} $, one can see that $Y$ is homeomorphic to a compact $3$-dimensional manifold. Let $\mathcal{H}^4_d $ and $\mathcal{H}^3_{d_Y} $ denote the $4$-dimensional Hausdorff measure of $(X,d)$ and the $3$-dimensional Hausdorff measure of $(Y,d_Y)$, respectively. Since the metric measure space $(X,d,\mathcal{H}_d^4 )$ is a $RCD^* (0,4)$ space, we see that the metric measure space $(Y,d_Y,\mathcal{H}_{d_Y}^3 )$ must be a $RCD^* (2,3)$ space \cite{ck1}. It follows that $\pi_1 (Y) $ is finite, and the universal covering space $\tilde{Y}$ is a compact $3$-dimensional manifold.

By Thurston's geometrization conjecture \cite{mt1, p1, p2, p3}, we can conclude that $\tilde{Y}$ is homeomorphic to $S^3 $, and $Y$ is homeomorphic to a space form $S^3 / \Gamma $, where $\Gamma \leq SO(4) $ is a finite group of acting freely on $S^3$ by rotations. It follows that $X=C(Y)$ is homeomorphic to $\mathbb{R}^4 /\Gamma $.

Let $Z$ be the link of $X$ at $p$. Since there exists an open neighborhood $U$ of $p\in X$ which homeomorphic to $C(Z) $, one can see that $\pi_1 (Z)=\pi_1 (Z\times \mathbb{R}) \leq \pi_1 ( (\mathbb{R}^4 \setminus \{ 0 \}) /\Gamma )  $ is a finite group. Hence we can use \cite[Corollary 3.7.12]{an1} to find a finite subgroup $\Gamma_{c} \leq GL(2;\mathbb{C}) $ such that $\mathbb{C}^2 /\Gamma_c $ is homeomorphic to $(X,p)$. Since $\Gamma_c $ is finite, we can define a Hermitian metric $h(u,v) = \sum_{\gamma\in\Gamma_c} h_{0} (\gamma u ,\gamma v) $ on $\mathbb{C}^2$, where $h_0$ is the Euclidean Hermitian metric on $\mathbb{C}^2$. One can easily to see that $h(\gamma u ,\gamma v) =h(u,v) $, $\forall \gamma\in\Gamma_c $. Then we can find an $A\in GL(2;\mathbb{C} ) $ such that $A\gamma A^{-1} \in U(2) $, $\forall \gamma\in\Gamma_c $. Without loss of generality, we can assume that $\Gamma_c \leq U(2) $, and $(X,p)$ is homeomorphic to $(\mathbb{C}^2 /\Gamma_c ,0) $.
\end{proof}

Before proving the uniqueness of the topology of the tangent cones, we recall some basic concepts.

Let $\left( X ,d ,p \right) $ be the pointed Gromov-Hausdorff limit of a sequence of pointed complete K\"ahler manifolds $\left( M_i ,\omega_i ,p_i \right) $ with $\Ric \left( \omega_i \right) \geq -\Lambda \omega_i $ and $\Vol \left( B_1 \left( p_i \right) \right) \geq v $, $\forall i\in\mathbb{N} $, where $\Lambda ,v>0$ are constants. We say that a point $x \subset X$ is $ \epsilon $-$regular$ if for any tangent cone $(V,o)$ at $x$, $d_{GH} \left( B^V_r(x) ,B^{\mathbb{C}^n }_r ( 0 ) \right) < \epsilon r $. The set of $ \epsilon $-regular points is denoted by $\mathcal{R}_{\epsilon } $. Cheeger-Jiang-Naber \cite{jcwsjan1} gives the sharp Minkowski type estimate for $X\setminus \mathcal{R}_{\epsilon} $. 

If for each $x\in U $, we can find a constant $r>0 $, a sequence of points $x_i \in M_i $ and holomorphic maps $ F_i = \left( z_{i,1} , \cdots ,z_{i,N} \right) : B_{r} (x_i ) \to \mathbb{C}^N $, such that $x_i $ converges to $x$, and $F_i $ converges to an injective holomorphic map $F$ on $B_{r} (x )$, then we say that the sequence of complex structures on $M_i $ converges to the complex structure on $U $. In \cite{lggs1, lggs2}, Liu-Sz\'ekelyhidi shows that by choosing a subsequence, the complex structures of $M_i$ converge to a complex manifold structure on $\mathcal{R}_{\epsilon} $ for some $\epsilon >0$, and the complex manifold structure on $\mathcal{R}_{\epsilon} $ can be extended to a complex normal space structure on $X$ if $X$ is a metric cone or $M_i$ are polarized.

Now we are ready to prove the uniqueness of the topology of the tangent cones at $x\in X$.

\begin{thm}
\label{thmtangenttopologyuniqueness}
Let $\left( X ,d ,p \right) $ be the pointed Gromov-Hausdorff limit of a sequence of pointed complete K\"ahler surfaces $\left( M_i ,\omega_i ,p_i \right) $ with $\Ric \left( \omega_i \right) \geq -\Lambda \omega_i $ and $\Vol \left( B_1 \left( p_i \right) \right) \geq v $, $\forall i\in\mathbb{N} $, where $\Lambda ,v>0$ are constants. Fix $x\in X$. Then there exists a finite subgroup $\Gamma_x \in U(2) $ which acts freely away from the origin such that every tangent cone at $x$ is homeomorphic to $\mathbb{C}^2 /\Gamma_x $.
\end{thm}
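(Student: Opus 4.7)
The strategy is to combine Proposition \ref{propconeorbifold} with the normal complex analytic structure on $X$ established by Liu-Sz\'ekelyhidi \cite{lggs1, lggs2} and refined in the preceding discussion, so as to identify a topological link of $x$ in $X$ that every tangent cone at $x$ must share. Proposition \ref{propconeorbifold}, applied to each tangent cone $V_\alpha$ at $x$, already gives $V_\alpha \cong \mathbb{C}^2/\Gamma_\alpha$ topologically, for some finite subgroup $\Gamma_\alpha \le U(2)$ acting freely off the origin. The content of the theorem is therefore the assertion that the link $S^3/\Gamma_\alpha$ is independent of the blow-up sequence $r_j \to 0$ that defines $V_\alpha$.

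To pin down a canonical link, I would use the refinement above to endow a neighborhood $U$ of $x$ in $X$ with the structure of a normal complex analytic surface germ having $x$ as its only singular point. For an isolated normal analytic surface singularity it is classical that there is a well-defined topological link $L_x$: a compact $3$-manifold such that, in any local analytic embedding into $\mathbb{C}^N$, the intersection of $X$ with a sufficiently small Euclidean sphere around $x$ is homeomorphic to $L_x$, and the punctured germ $U \setminus \{x\}$ is homeomorphic to $L_x \times (0,1]$. This gives a candidate for the uniform topological model.

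It then remains to identify $S^3/\Gamma_\alpha$ with $L_x$ for every tangent cone. Fix a sequence $r_j \to 0$ realizing $V_\alpha$ and, for each $s \in (0,1)$, consider the metric annuli $A_j := B_{r_j}(x) \setminus \overline{B_{s r_j}(x)}$. With the rescaled distance $r_j^{-1}\, d$, they Gromov-Hausdorff converge to the cylinder $B_1(o) \setminus \overline{B_s(o)} \subset V_\alpha$ over $S^3/\Gamma_\alpha$. On the other hand, since $X \setminus \{x\}$ is smooth and the K\"ahler structures on $(M_i,\omega_i)$ converge in $C^\infty_{\mathrm{loc}}$ on the regular part, for every fixed $s$ the annulus $A_j$ becomes diffeomorphic to $(s,1) \times S^3/\Gamma_\alpha$ for $j$ large; simultaneously $A_j \subset U \setminus \{x\}$ is diffeomorphic to an open subcylinder of $L_x \times (0,1]$. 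Equating the two descriptions yields a homeomorphism $L_x \cong S^3/\Gamma_\alpha$. Fixing once and for all a finite subgroup $\Gamma_x \le U(2)$ with $S^3/\Gamma_x \cong L_x$, every tangent cone at $x$ is then homeomorphic to the cone $C(L_x) \cong \mathbb{C}^2/\Gamma_x$, as required. The main obstacle is precisely this last comparison: upgrading the Gromov-Hausdorff convergence of the annular regions, together with the smooth convergence of the K\"ahler structures on the regular part, to a genuine homeomorphism between the abstract link $L_x$ of the normal surface germ and the tangent-cone link $S^3/\Gamma_\alpha$.
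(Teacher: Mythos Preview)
Your approach has a structural gap: you assume that a neighborhood of $x$ in $X$ carries the structure of a normal complex analytic surface germ, and then use the well-defined topological link $L_x$ of that germ as the common model. But in the generality of Theorem~\ref{thmtangenttopologyuniqueness} no polarization is assumed, and the discussion preceding the theorem explicitly says that Liu--Sz\'ekelyhidi's extension of the complex structure on $\mathcal{R}_\epsilon$ to a normal complex space structure on $X$ is only available when $X$ is itself a metric cone or when the $M_i$ are polarized. So you are invoking a structure on $X$ that is not known to exist under the stated hypotheses. (Indeed, in the appendix the logical order is the reverse of what you propose: Theorem~\ref{thmtangenttopologyuniqueness} is an ingredient in proving, in the polarized case, that $X$ is an orbifold; it is not a consequence of it.)

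Even granting the normal-germ structure, your annular comparison is under-justified: you assert that for $j$ large the metric annulus $A_j = B_{r_j}(x)\setminus \overline{B_{sr_j}(x)}$ in $X$ is \emph{diffeomorphic} to a cylinder over $S^3/\Gamma_\alpha$, citing the $C^\infty_{\mathrm{loc}}$ convergence of $(M_i,\omega_i)$ to $X$ on the regular part. But that convergence is from $M_i$ to $X$, not from rescalings of $X$ to its tangent cones; the metric on $X\setminus\{x\}$ is only a limit metric, and pointed Gromov--Hausdorff convergence of $(X,r_j^{-1}d,x)$ to $V_\alpha$ does not by itself upgrade to smooth (or even topological) convergence of annuli. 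Some additional input---holomorphic charts on the rescalings, say---is needed to make this step go through, and that input is essentially what the paper uses directly.

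The paper's argument sidesteps both issues by never touching $X$ at all. It works entirely in the space of tangent cones at $x$: for a \emph{convergent} sequence of tangent cones $V_j \to V_\infty$, Liu--Sz\'ekelyhidi's $L^2$-estimates give holomorphic maps $F_j : V_j \to \mathbb{C}^N$ converging to a proper embedding of $V_\infty$, which upgrades the GH convergence on the annular regions $B_2(o_j)\setminus B_{1/4}(o_j)$ to $C^\infty$ convergence. This shows that the set of tangent cones homeomorphic to a fixed model $(V_x,o_x)$ is both open and closed in the space of all tangent cones at $x$; since that space is connected, all tangent cones share the same homeomorphism type. No analytic structure on $X$, and no polarization hypothesis, is needed.
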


\begin{proof}
Assume that $(V_j ,o_j) = (C(Y_j) ,o_j ) $ is a sequence of tangent cones at $x\in X$, and the sequence of metric spaces $Y_j$ converges to a metric space $Y_\infty $ in the Gromov-Hausdorff sense. Then $(V_\infty ,o_\infty ) = (C(Y_\infty ) ,o_\infty )$ is also a tangent cone at $x\in X$. Furthermore, Proposition \ref{propconeorbifold} implies that $(V_\infty ,o_\infty )$ is biholomorphic to $(\mathbb{C}^2 /\Gamma_\infty ,0) $, where $\Gamma_\infty \leq U(2) $ is a finite subgroup acts freely away from the origin of $\mathbb{C}^2 $.

By Liu-Sz\'ekelyhidi's $L^2$-estimate in \cite{lggs2}, one can find a sequence of holomorphic maps $F_j =(f_{j ,1} ,\cdots, f_{j ,N} ) : V_{j} \to \mathbb{C}^N $ converges to a proper holomorphic embedding $F_\infty =(f_{\infty ,1} ,\cdots, f_{\infty ,N} ) : V_{\infty} \to \mathbb{C}^N $ in the pointed Gromov-Hausdorff sense, where $f_{\infty ,j}$ are polynomial growth holomorphic functions on $V_\infty $. Then we can find smooth maps $ \varphi_j : B_2 (o_j ) \setminus B_{\frac{1}{4}} (o_j ) \to V_{\infty } $ for sufficiently large $j$ such that the sequence of maps gives the $C^\infty $ convergence $B_1 (o_j ) \setminus B_{\frac{1}{2}} (o_j ) \to B_1 (o_\infty ) \setminus B_{\frac{1}{2}} (o_\infty ) $ as $j\to\infty $. Fix a tangent cone $(V_x,o_x) $ at $x\in X$. Now we consider the set $ \Xi_x $ of tangent cones at $x $ containing all tangent cones at $x $ which homeomorphic to $(V_x,o_x) $. By the argument above, we see that $\Xi_x$ is open and close subset in the space of all tangent cones at $x\in X$. Since the space of all tangent cones at $x\in X$ is connected, we see that the space is $\Xi_x $, and every tangent cone at $x $ is homeomorphic to $(V_x ,o_x)$.
\end{proof}

Now we at the place to prove that the Gromov-Hausdorff limits of polarized surfaces are holomorphic orbifolds.

\begin{thm}
Let $\left( X ,d ,p \right) $ be the pointed Gromov-Hausdorff limit of a sequence of pointed complete K\"ahler surfaces $\left( M_i ,\omega_i ,p_i \right) $ with $\Ric \left( \omega_i \right) \geq -\Lambda \omega_i $ and $\Vol \left( B_1 \left( p_i \right) \right) \geq v $, $\forall i\in\mathbb{N} $, where $\Lambda ,v>0$ are constants. Assume that $\omega_i $ is polarized on $B_1 (y) $ for any $y\in M_i$, $\forall i $. Then the limit space $\left( X ,d  \right) $ is homeomorphic to a holomorphic orbifold.
\end{thm}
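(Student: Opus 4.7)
The plan is to upgrade the normal complex analytic structure on $X$ produced by Liu-Sz\'ekelyhidi \cite{lggs1, lggs2} to a holomorphic orbifold structure, by combining it with the topological uniqueness of tangent cones established in Theorem \ref{thmtangenttopologyuniqueness}. The inputs I will use are: (i) $X$ is a normal two-dimensional complex analytic space, the complex structures of the $M_i$ converge smoothly to this structure on $\mathcal{R}_\epsilon$, and $X$ has isolated singularities; (ii) for every $x\in X$ there is a finite subgroup $\Gamma_x\leq U(2)$ acting freely off the origin such that every tangent cone at $x$ is homeomorphic to $\mathbb{C}^2/\Gamma_x$. In particular, a small geodesic ball $U = B_r(x)\subset X$ satisfies $\pi_1(U\setminus\{x\}) = \Gamma_x$, and the link of $x$ is homeomorphic to $S^3/\Gamma_x$.

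The main step is a local uniformization of $X$ near each singular point $x$. Since $U\setminus\{x\}$ is a complex manifold with finite fundamental group $\Gamma_x$, its universal cover $V = \widetilde{U\setminus\{x\}}$ is a complex manifold carrying a free holomorphic $\Gamma_x$-action with quotient $U\setminus\{x\}$. By Grauert's removable-singularity theorem for finite analytic covers of two-dimensional normal analytic spaces, $V$ extends uniquely to a normal complex analytic surface $\tilde U$ with a $\Gamma_x$-action fixing a single point $\tilde x$ such that $\tilde U/\Gamma_x = U$ as analytic spaces. Because $V$ is the universal cover of $U\setminus\{x\}$, the link of $\tilde x$ in $\tilde U$ is simply connected, i.e.\ homeomorphic to $S^3$. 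By Mumford's classical theorem, an isolated normal surface singularity with simply connected link is smooth, so $\tilde U$ is smooth at $\tilde x$.

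Once $\tilde U$ is smooth at $\tilde x$, the finite holomorphic $\Gamma_x$-action fixing $\tilde x$ is linearizable by Cartan's lemma, hence conjugate to a linear representation $\Gamma_x \hookrightarrow GL(2,\mathbb{C})$; averaging a Hermitian form exactly as in the last paragraph of the proof of Proposition \ref{propconeorbifold} conjugates this representation into $U(2)$. Therefore $(U,x)$ is biholomorphic to a neighborhood of the origin in $\mathbb{C}^2/\Gamma_x$, which is precisely a holomorphic orbifold chart around $x$. Since the singular set of $X$ is isolated and the regular part already inherits a complex manifold structure from Liu-Sz\'ekelyhidi, these charts together provide a holomorphic orbifold atlas on $X$.

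The main obstacle is the production and compatibility of the analytic extension $\tilde U$: one must check that the topological universal cover of $U\setminus\{x\}$ carries a genuine finite analytic covering structure that extends across the puncture to a normal analytic germ whose quotient is $U$ itself, and not merely something homeomorphic to $U$. This is where the normality of $U$ provided by \cite{lggs2} is crucial, because normality implies Riemann extension of bounded holomorphic functions across the codimension-two locus, which is what makes Grauert's theorem applicable here. Once this analytic extension is in place, the conclusion follows formally from Theorem \ref{thmtangenttopologyuniqueness}, Mumford's theorem, and the linearization of finite analytic group actions at fixed points.
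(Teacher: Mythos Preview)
Your approach is correct and structurally parallel to the paper's, but with a redistribution of what is black-boxed versus spelled out. The paper first establishes that a punctured neighborhood of each singular point $x$ is homeomorphic to a punctured neighborhood of $0\in\mathbb{C}^2/\Gamma_x$ by an explicit nested-annulus construction: using Theorem \ref{thmtangenttopologyuniqueness} together with the smooth convergence on $\mathcal{R}_\epsilon$ from \cite{lggs1, lggs2}, one finds at each dyadic scale an annular region $U_j\subset X$ diffeomorphic to the corresponding annulus in $\mathbb{C}^2/\Gamma_x$, and glues these into a nested family $W_j$. Having shown $X$ is a topological orbifold, the paper then invokes \cite[Corollary 3.7.12]{an1} as a black box to upgrade the normal surface structure to a holomorphic orbifold structure. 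You invert this: you assert the link topology and instead unpack the black box, which is precisely the Grauert--Remmert extension, Mumford's smoothness criterion, and Cartan linearization that you describe; this is indeed the classical proof of the result the paper cites.

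The one step that is under-justified in your write-up is the ``In particular'' passing from (ii) to $\pi_1(U\setminus\{x\})=\Gamma_x$. Uniqueness of the \emph{metric} tangent cone topology does not by itself identify the \emph{analytic} link of the normal surface singularity; one needs exactly the annulus argument the paper carries out (GH-closeness of $B_r(x)$ to $B_r(0)\subset\mathbb{C}^2/\Gamma_x$ at every small $r$, plus $C^\infty$ convergence away from the singular point, gives diffeomorphisms of annular shells that then assemble into a homeomorphism of punctured neighborhoods). Once that is supplied, your Grauert/Mumford/Cartan argument goes through cleanly.
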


\begin{proof}
By \cite{lggs1}, $X$ is homeomorphic to an analytic normal surface, and the complex structure of $X$ is given by the convergence of a subsequence of the sequence of complex structures on $\{ M_i \}_{i=1}^{\infty} $. So we can find $x_1 ,\cdots ,x_N \in X $ such that $X\setminus \{ x_1,\cdots ,x_N \} $ is a complex manifold.

Write $x=x_1$. By Theorem \ref{thmtangenttopologyuniqueness}, every tangent cone at $x$ is biholomorphic to $\mathbb{C}^2 /\Gamma_x $ for some $\Gamma_x \leq U(2) $ which acts freely away from the origin. Hence for any $\epsilon >0 $, we can find a constant $\delta >0 $ such that for any $r\in (0,\delta )$, there exists a metric cone $(V_r ,o_r)$ which biholomorphic to $( \mathbb{C}^2 /\Gamma_x ,0 )$ satisfying $d_{GH} (B_r (x) ,B_r (o_r) ) <\epsilon r $. Since the metrics $\omega_i$ are polarized on unit balls, for sufficiently small $r>0$, one can apply H\"ormander's $L^2$ estimate to construct holomorphic charts on $B_{2^{10}r} (x) \setminus B_r (x) $ close to the holomorphic charts on $B_{2^{10}r} (o_r) \setminus B_r (o_r) \subset V_r $. See also \cite[Proposition 5.1]{sxz1}. Then we can find open subsets $U_j $ of $X$ for sufficiently large $j\in\mathbb{N}$ such that $B_{2^{5-4j}} (x) \setminus B_{2^{-1-4j}} (x) \subset U_{j} \subset B_{2^{6-4j}} (x) \setminus B_{2^{-3-4j}} (x) $ such that $U_j$ is diffeomorphic to $B_1 (0) \setminus \bar{B}_{2^{-4}} (0) \subset \mathbb{C}^2 /\Gamma_x $.

Then we can find a sequence of open neighborhoods $\{W_j \} $ of $x$ such that $W_j \subset B_{2^{6-4j}} (x) $, $W_{j+1} \subset W_j $, and $\bar{W}_j \setminus W_{j+1}$ is homeomorphic to $\bar{B}_1 (0) \setminus B_{2^{-1}} (0) \subset \mathbb{C}^2 /\Gamma_x $. It follows that $X$ is homeomorphic to a topological orbifold with isolated singularities. Hence we can use \cite[Corollary 3.7.12]{an1} to show that $X $ is a holomorphic orbifold.
\end{proof}


\begin{thebibliography}{BBEGZ16} 
{

\bibitem{anderson1} M. T.~Anderson:
\emph{Convergence and rigidity of manifolds under Ricci curvature bounds},
Invent. Math. $\mathbf{102}$ (1990), 429–445.

\bibitem{bkn1} S. Bando, A. Kasue, H. Nakajima:
\emph{On a construction of coordinates at infinity on manifolds with fast curvature decay and maximal volume growth},
Invent. Math. $\mathbf{97}$ (1989), 313–349.

\bibitem{bredon1} G. E. Bredon:
\emph{Introduction to compact transformation groups},
Pure and Applied Mathematics, Vol. $\mathbf{46}$. Academic Press, 1972.

\bibitem{bochmon1} S.~Bochner, D.~Montgomery:
\emph{Groups of differentiable and real or complex analytic transformations},
Ann. of Math. $\mathbf{46}$ (1945), 685-694. 

\bibitem{bruenabersemola1} E. Bru\`e, A. Naber, D. Semola: 
\emph{Stability of Tori under Lower Sectional Curvature},
Geom. Topol. $\mathbf{28}$ (2024), 3961–3972.

\bibitem{bps1}  E. Bru\`e, A. Pigati, D. Semola:
\emph{Topological regularity and stability of noncollapsed spaces with Ricci curvature bounded below},
preprint, arxiv: 2405.03839.

\bibitem{calabihartman1} E. Calabi, P. Hartman: 
\emph{On the smoothness of isometries},
Duke Math. J. $\mathbf{37}$ (1970), 741-750.

\bibitem{jckfmg1} J. Cheeger, K. Fukaya, M. Gromov:
\emph{Nilpotent structures and invariant metrics on collapsed
manifolds},
J. Amer. Math. Soc. $\mathbf{5}$ (1992), 327–372. 

\bibitem{jcwsjan1} J.~Cheeger, W.-S. Jiang, A.~Naber:
\emph{Rectifiability of singular sets of noncollapsed limit spaces with Ricci curvature bounded below},
Ann. of Math. $\mathbf{193}$ (2021), 407–538. 

\bibitem{coldingnaber1} T.-H. Colding, A.~Naber:
\emph{Sharp H\"older continuity of tangent cones for spaces with a lower Ricci curvature bound and applications},
Ann. of Math. $\mathbf{176}$ (2012), 1173–1229. 


\bibitem{crisphillman1} J. S. Crisp, J. A. Hillman: 
\emph{Embedding Seifert fibred $3$-manifolds and $Sol^3$-manifolds in $4$-space},
Proc. London Math. Soc. $\mathbf{76}$ (1998), 685–710.

\bibitem{kf1} K. Fukaya:
\emph{Collapsing Riemannian manifolds to ones with lower dimensions},
J. Differential Geom. $\mathbf{25}$ (1987), 139–156. 

\bibitem{frequi1} M. Freedman and F. Quinn: 
\emph{Topology of 4-Manifolds},
Princeton Math. Ser., $\mathbf{39}$, Princeton University Press, 1990.

\bibitem{harpol1} R. Harvey and J. Polking: 
\emph{Removable singularities of solutions of linear partial differential equations},
Acta Math. $\mathbf{125}$ (1970), 39–56.

\bibitem{jwsnb1} W.-S.~Jiang, A.~Naber:
\emph{$L^2$ curvature bounds on manifolds with bounded Ricci curvature},
Ann. of Math. $\mathbf{193}$ (2021), 107–222.

\bibitem{jzh1} Z.-H. Jiang, L.-L. Kong, S.-C. Xu: 
\emph{Convergence of Ricci-limit spaces under bounded Ricci curvature and local covering geometry I},
preprint, \href{https://arxiv.org/abs/2205.00373}{https://arxiv.org/abs/2205.00373}.

\bibitem{ck1} C.~Ketterer:
\emph{Cones over metric measure spaces and the maximal diameter theorem},
J. Math. Pures Appl. \textbf{103} (2015), 1228–1275.

\bibitem{ks1} B.~Kloeckner, S. Sabourau:
\emph{Mixed sectional-Ricci curvature obstructions on tori},
J. Topol. Anal. \textbf{12} (2020), 713–734.

\bibitem{lawson1}  T.~Lawson:
\emph{Normal bundles for an embedded $\mathbb{R}P^2$ in a positive definite 4-manifold},
J. Differential Geom. $\mathbf{22}$ (1985), 215–231.

\bibitem{lggs1} G. Liu, G. Sz\'ekelyhidi, 
\emph{Gromov-Hausdorff limits of K\"ahler manifolds with Ricci curvature bounded below},
Geom. Funct. Anal. \textbf{32} (2022), 236–279.

\bibitem{lggs2} G. Liu, G. Sz\'ekelyhidi:
\emph{Gromov-Hausdorff limits of K\"ahler manifolds with Ricci curvature bounded below. II},
Comm. Pure Appl. Math. $\mathbf{74}$ (2021), no. 5, 909–931.

\bibitem{nakajima1} H. Nakajima:
\emph{Self-duality of ALE Ricci-flat $4$–manifolds and positive mass theorem},
from: “Recent topics in differential and analytic geometry”, (T Ochiai, editor), Adv. Stud. Pure Math. $\mathbf{18}$, Academic Press, 1990.

\bibitem{mt1} J. W. Morgan and G. Tian:
\emph{Ricci Flow and the Poincar\'e Conjecture},
Clay Mathematics Monographs., American Mathematical Society, 2007.

\bibitem{an1} A. N\'emethi:
\emph{Normal Surface Singularities},
Ergebnisse der Mathematik und ihrer Grenzgebiete, 3. Folge
/ A Series of Modern Surveys in Mathematics, Springer, 2022.

\bibitem{p1} G. Perelman: 
\emph{The entropy formula for the Ricci flow and its geometric applications},
preprint, \href{https://arxiv.org/pdf/math/0211159.pdf}{https://arxiv.org/pdf/math/0211159.pdf}.

\bibitem{p2} G. Perelman: 
\emph{Finite extinction time for the solutions to the Ricci flow on certain three-manifolds},
preprint, \href{https://arxiv.org/pdf/math/0307245.pdf}{https://arxiv.org/pdf/math/0307245.pdf}.

\bibitem{p3} G. Perelman: 
\emph{Ricci flow with surgery on three-manifolds},
preprint, \href{https://arxiv.org/pdf/math/0303109.pdf}{https://arxiv.org/pdf/math/0303109.pdf}.

\bibitem{pp1} P.~Petersen:
\emph{Riemannian geometry},
3rd ed., Grad. Texts in Math, 171. Springer, Cham, 2016.

\bibitem{Simon} M. Simon:
\emph{Ricci flow of non-collapsed three manifolds whose Ricci curvature is bounded from below},
J. Reine Angew. Math. {\bf 662} (2012), 59--94.

\bibitem{tg3} G. Tian:
\emph{K-stability and Kähler-Einstein metrics},
Comm. Pure Appl. Math. $\mathbf{68}$ (2015), 1085–1156.

\bibitem{tg5} G.~Tian:
\emph{On Calabi's conjecture for complex surfaces with positive first Chern class},
Invent. Math. $\mathbf{101}$ (1990), 101–172.

\bibitem{tg6} G.~Tian:
\emph{Partial $C^0$-estimate for K\"ahler-Einstein metrics},
Commun. Math. Stat. $\mathbf{1}$ (2013), 105–113.

\bibitem{wal1} F. Waldhausen:
\emph{On irreducible 3-manifolds which are sufficiently large},
Ann. of Math. $\mathbf{87}$ (1968), 56-88.

\bibitem{Zamora2} S. Zamora:
\emph{Tori can’t collapse to an interval},
Electron. Res. Arch. {\bf 29} (2021), no. 4, 2637–2644.

\bibitem{Zamora3} S. Zamora:
\emph{First Betti number and collapse},
preprint, \href{https://arxiv.org/abs/2209.12628}{https://arxiv.org/abs/2209.12628}.


\bibitem{sxz1} S. Zhou:
\emph{The asymptotic behaviour of Bergman kernels},
Adv. Math. $\mathbf{440}$ (2024).

}\end{thebibliography}
\end{document}